\documentclass[10pt,journal,compsoc]{IEEEtran}

\ifCLASSOPTIONcompsoc

  \usepackage[nocompress]{cite}
\else

  \usepackage{cite}
\fi

\usepackage{amsmath}
\usepackage{amssymb}
\usepackage{bbm}
\usepackage{amsthm}
\usepackage{commath}
\usepackage{graphicx}
\usepackage{tikz}
\usepackage{hyperref}
\usetikzlibrary{arrows.meta}
\usetikzlibrary{positioning}
\usetikzlibrary {shapes.misc}
\usepackage{algorithm}
\usepackage{algorithmic}
\usepackage{bm}

\usepackage{enumitem,kantlipsum}

\usetikzlibrary{calc}

\newcommand{\R}{\mathbb{R}}
\newcommand{\G}{\mathcal{G}}
\newcommand{\V}{\mathcal{V}}
\newcommand{\E}{\mathcal{E}}

\newcommand{\x}{\mathbf{x}}
\newcommand{\y}{\mathbf{y}}
\newcommand{\uu}{\mathbf{u}}

\newcommand{\NN}{\mathcal{N}}
\newcommand{\F}{\mathcal{F}}
\newcommand{\Fone}{\mathcal{F}_Z}
\newcommand{\Ftwo}{\mathcal{F}_{Z,NL}}

\newcommand{\prt}[1]{\left(#1\right)}

\newtheorem{assumption}{Assumption}
\newtheorem{theorem}{Theorem}

\newtheorem{proposition}{Proposition}
\newtheorem{lemma}{Lemma}

\theoremstyle{definition}
\newtheorem{definition}{Definition}[section]

\DeclareMathOperator*{\argmin}{arg\,min}

\begin{document}
\title{Identification of Nonlinear Acyclic Networks  in Continuous Time from Nonzero Initial Conditions and Full Excitations}
\author{Ramachandran~ Anantharaman, Renato~Vizuete, 
        Julien~M. Hendrickx,
and~Alexandre~Mauroy
\thanks{R.~Anantharaman is with Department of Electrical Engineering, Shiv Nadar University, Noida, India. R.~Vizuete and J.~M.~Hendrickx are with ICTEAM institute, UCLouvain, B-1348, Louvain-la-Neuve, Belgium. R.~Vizuete is a FNRS Postdoctoral Researcher - CR. A.~Mauroy is with Department of Mathematics, Namur Research Institute for Complex Systems, University of Namur, Namur, Belgium.  
{\tt\small ramachandran.a@snu.edu.in},\\{\tt\small renato.vizueteharo@uclouvain.be},\\{\tt\small julien.hendrickx@uclouvain.be},\\{\tt\small alexandre.mauroy@unamur.be}.}
\thanks{This work was supported by F.R.S.-FNRS via the \emph{KORNET} project, and by the \emph{SIDDARTA} Concerted Research Action (ARC) 
of the Fédération Wallonie-Bruxelles.}}

\IEEEtitleabstractindextext{
\begin{abstract}
We propose a method to identify nonlinear acyclic networks in continuous time when the dynamics are located on the edges and all the nodes are excited. We show that it is necessary and sufficient to measure all the sinks to identify any tree in continuous time when the functions associated with the dynamics are analytic and satisfy $f(0)=0$, which is analogous to the discrete-time case.
For general directed acyclic graphs (DAGs), we show that it is necessary and sufficient to measure all sinks, assuming that the dynamics are not linear (a condition that can be relaxed for trees). 
Then, based on the measurement of higher order derivatives and nonzero initial conditions, we introduce a method for the identification of trees, which allows us to recover the nonlinear functions located in the edges of the network under the assumption of dictionary functions. 
Finally, we propose a  method to identify multiple parallel paths of the same length between two nodes, which allow us to identify any DAG when combined with the algorithm for the identification of trees.
Several examples are added to illustrate the results.
\end{abstract}

\begin{IEEEkeywords}
Network analysis and control, nonlinear network, network topology, system identification.
\end{IEEEkeywords}}
\maketitle

\IEEEdisplaynontitleabstractindextext
\IEEEpeerreviewmaketitle
\IEEEraisesectionheading{\section{Introduction}\label{sec:introduction}}

\IEEEPARstart{S}{ystems} composed by single entities or subsystems interacting through a network are ubiquitous \cite{boccaletti2006complex,zanudo2017structure}. One of the most important problems in networked systems is the identification of the dynamics associated with the edges, which is essential for further actions like analysis, prediction or control of the evolution of the states associated with the nodes. 
Clearly, any experimental setting for  the identification of networks is first based on identifiability conditions which address the question: Which nodes should be excited and measured to identify the network? For instance, Fig.~\ref{fig:electrical} depicts an electrical network where the objective is to identify the electrical components through an appropriate choice of excited nodes (red) and measured nodes (green). In this case, identifiability aims at deciding under which conditions the network could be theoretically identified, independently of any practical identification algorithm. When the dynamics are linear, properties of transfer functions can be exploited, and the identifiability problem has been extensively studied in \cite{weerts2018identifiability,hendrickx2019identifiability,vanwaarde2020necessary} in the full excitation case (i.e., all nodes are excited). However, the nonlinear case is naturally more challenging due to the variety of models (e.g., Volterra series, Wiener, Hammerstein, etc.) and the different possible types of functions (e.g., analytic, piecewise continuous, etc.). The problem of the identifiability of nonlinear networks  with full excitation in discrete time has been addressed in \cite{vizuete2023nonlinear,vizuete2024nonlinear}, where surprisingly, the identifiability conditions appear to be weaker than in the linear case due to the effects of the nonlinearities that allow us to distinguish the information coming from different paths.

\begin{figure}
\centering
\includegraphics[width=\linewidth]{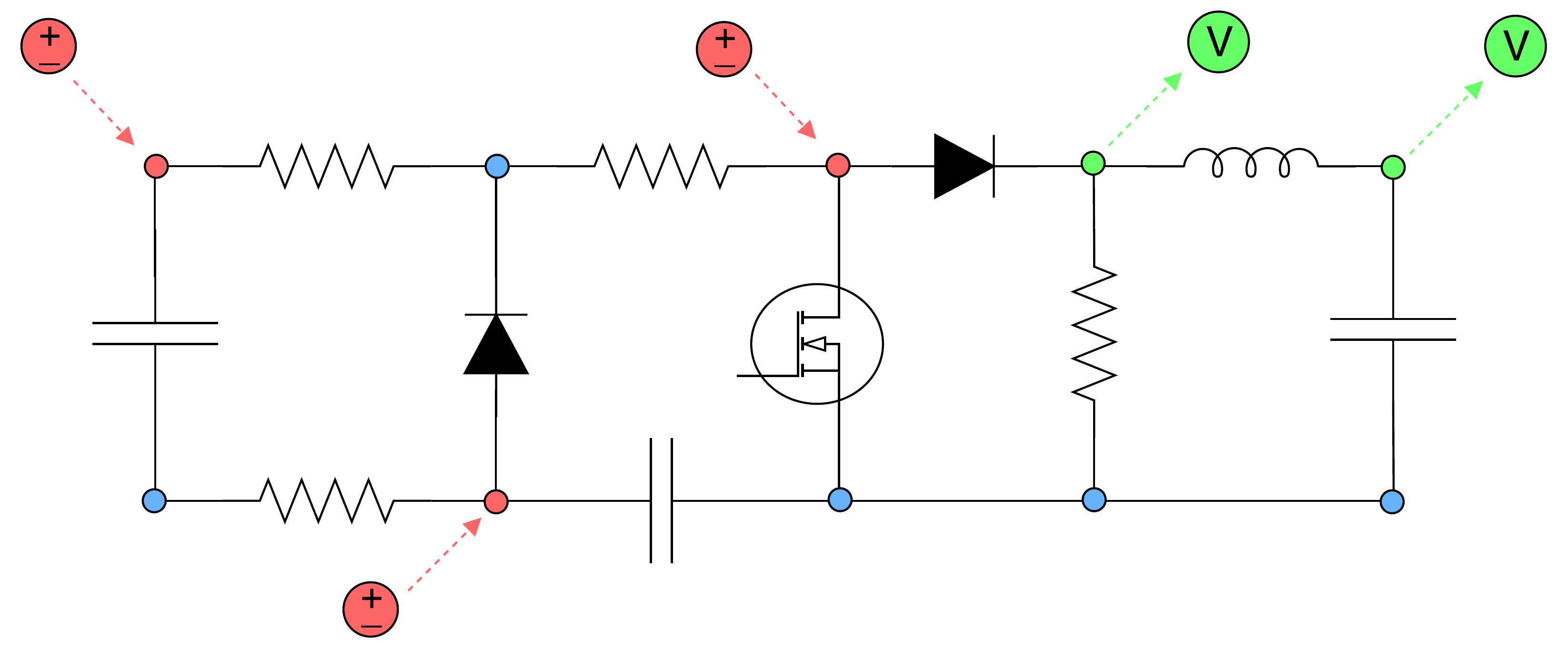} 
\vspace{-5mm}
\caption{Electrical network where some nodes can be excited with an electrical source (red) and some nodes can be measured through voltmeters (green).} 
\label{fig:electrical}
\end{figure}

Unfortunately, most of the identifiability conditions are obtained for discrete-time models, which could fail to reproduce the behavior of some complex phenomena in nature, specially associated with chaotic systems \cite{krivine2007discrete}. Furthermore, for nonlinear systems, control algorithms are usually designed for continuous-time dynamical systems, and therefore, the derivation of identifiability conditions in continuous time is crucial to experimentally obtain an appropriate model of a networked system that is more suitable for analysis and control \cite{bullo2022lectures}. These identifiability conditions are commonly derived under the assumption of a perfect scenario and ideal mathematical tools. 
Based on this assumption, if an edge is not identifiable, then it cannot be identified with all the possible experiments. Nevertheless, if the edge is identifiable, it is possible to obtain an approximation of the dynamics with a sufficient number of experiments.

Regarding the practical implementation of identification procedures, the main challenge relies on the identification of several nonlinear models in continuous time interconnected through the network. Due to the variety of models and type of nonlinearities, the identification of a single nonlinear system is indeed a complex task, and several approaches can be used with different fields of applications \cite{brunton2016discovering,nelles2020nonlinear}. Unlike discrete-time methods, a continuous-time model is based on ordinary differential equations (ODEs), so that the identification methods are intrinsically related to obtaining information of higher order derivatives that allow us to measure the effect of the excitation signals in past time instants. Several works focusing on network identification in continuous time cover only linear dynamics \cite{liang2025frequency} or sparse networks \cite{anantharaman2025koopman}, or require the knowledge of specific variables over time \cite{verdiere2024identifiability}. However, to the best of our knowledge, an algorithm for the identification of nonlinear networks in continuous time based on the measurement of relatively few nodes pinpointed by identifiability conditions has not been proposed.

In networked systems, the conception of a practical identification algorithm based on the identifiability conditions is not a trivial task, as it must take into account additional phenomena that occur in real environments \cite{rodrigues2024frequency,fonken2023local,kivits2022identification}. For instance, a simple action like the derivative of a function that is used in the inference of identifiability conditions in 
\cite{vizuete2024nonlinear} cannot be implemented in real scenarios in a perfect manner. Moreover, the presence of noise could modify the stability of the identification method and have a significant impact on the network due to its propagation through the several interconnected subsystems. Finally, for large networks, it is hard to obtain an accurate identification of all the dynamics with the measurement of only a few nodes, as  is claimed by the identifiability conditions in discrete-time models \cite{vizuete2023nonlinear,vizuete2024nonlinear}. That is why it is important to design an algorithm based on the identifiability conditions which can be implemented in real nonlinear networks.

In this work, we focus on the identification of nonlinear acyclic networks in continuous time when the dynamics are located in the edges and all the nodes are excited. Acyclic networks encompass an important family of graphs, whose identifiability has been extensively studied in the linear case \cite{mapurunga2022excitation,cheng2024identifiability}, and that we believe could be used for the analysis of more general networks that include cycles through the notion of \emph{unfolded digraphs} \cite{murphy2002dynamic,vizuete2024nonlinear}. In addition, in the full excitation case, the identifiability problem is related to determine which nodes should be measured so that there is a unique set of edge functions that lead to the information obtained through the measurements. First, we restrict the identifiability problem to the case of trees in the class of nonlinear functions satisfying $f(0)=0$, and we prove that the measurement of the sinks is necessary and sufficient for identification, including the particular case of linear functions. Then, by discarding linear functions in the network (i.e., network with only purely nonlinear functions), we prove that the measurement of the sinks is necessary and sufficient for identifiability of general directed acyclic graphs (DAGs). Next, under the assumption that each edge function is a finite linear combination of known dictionary functions, we propose an algorithm for the identification of path graphs and trees  solely based on the information obtained with the measurement of the sinks. Further, based on the derivative of the function associated with the measurement of a node with respect to the state of the neighboring nodes, we propose a practical algorithm for the identification of multiple parallel paths of the same length between two nodes. This second algorithm guarantees the identification of any DAG when combined with the algorithm for the identification of trees. 

The rest of this paper is structured as follows. Section~\ref{sec:problem} introduces the model class and the notions of identifiability at the level of the network.  In Section~\ref{sec:trees}, we analyze the identifiability conditions for trees, and Section~\ref{sec:DAG} focuses on identifiability conditions for general DAGs. In Section~\ref{sec:algo} we propose a practical algorithm for the identification of both trees and DAGs along with numerical examples. 
Finally, conclusions and future work are presented in Section~\ref{sec:conclusions}.

\begin{figure*}[!t]
    \centering
\begin{tikzpicture}[
    compartment/.style={draw, rounded corners, minimum width=2cm, minimum height=1.2cm, align=center},
    arrow/.style={->, thick},
    node distance=3.2cm, roundnodes/.style={circle, draw=black!60, fill=black!5, very thick, minimum size=1mm},roundnode/.style={circle, draw=white!60, fill=white!5, very thick, minimum size=1mm},roundnodes2/.style={circle, draw=black!60, fill=green!30, very thick, minimum size=1mm}
]

\node[compartment] (C1) {Tank 1 \\ $x_1$};

\node[compartment, right=2.3cm of C1, yshift=1.2cm] (C2) {Tank 2 \\ $x_2$};
\node[compartment, right=2.3cm of C1, yshift=-1.2cm] (C3) {Tank 3 \\ $x_3$};

\node[compartment, right=6.4cm of C1] (C4) {Tank 4 \\ $x_4$};

\draw[arrow] ($(C1.north)+(0,0.7)$) -- (C1.north) node[midway, right] {$u_1$};
\draw[arrow] ($(C2.north)+(0,0.7)$) -- (C2.north) node[midway, right] {$u_2$};
\draw[arrow]  ($(C3.north)+(0,0.7)$) -- (C3.north) node[midway, right] {$u_3$};
\draw[arrow] ($(C4.north)+(0,0.7)$) -- (C4.north) node[midway, right] {$u_4$};

\draw[arrow] (C1.east) -- (C2.west) node[midway, above,yshift=2.5mm] {$f_{2,1}(x_1)$};
\draw[arrow] (C1.east) -- (C3.west) node[midway, below,yshift=-3mm] {$f_{3,1}(x_1)$};

\draw[arrow] (C2.east) -- ($(C4.west)+(0,0.6)$) node[midway, above,yshift=0mm,xshift=4mm] {$f_{4,2}(x_2)$};
\draw[arrow] (C3.east) -- ($(C4.west)+(0,-0.6)$) node[midway, below,yshift=-0mm,xshift=3mm] {$f_{4,3}(x_3)$};

\node[roundnodes](node1)[right=of C1,yshift=1mm,xshift=7.3cm]{1};
\node[roundnodes](node2)[above=of node1,yshift=-2.4cm,xshift=1.9cm]{2};
\node[roundnodes2](node4)[right=3.2cm of node1]{4};
\node[roundnodes](node3)[below=of node1,yshift=2.4cm,xshift=1.9cm]{3};

\node[roundnode](u4)[above=of node4,yshift=-3.2cm,xshift=1.4cm]{$u_4$};
\node[roundnode](u3)[above=of node3,yshift=-4.5cm,xshift=-1.4cm]{$u_3$};
\node[roundnode](u2)[above=of node2,yshift=-3.2cm,xshift=-1.4cm]{$u_2$};
\node[roundnode](u1)[above=of node1,yshift=-3.2cm,xshift=-1.4cm]{$u_1$};

\draw[-{Classical TikZ Rightarrow[length=1.5mm]}] (node1) to node [left,swap,yshift=2.5mm] {$f_{2,1}$} (node2);
\draw[-{Classical TikZ Rightarrow[length=1.5mm]}] (node2) to node [right,swap,yshift=2.5mm] {$f_{4,2}$} (node4);
\draw[-{Classical TikZ Rightarrow[length=1.5mm]}] (node1) to node [left,swap,yshift=-2.5mm] {$f_{3,1}$} (node3);
\draw[-{Classical TikZ Rightarrow[length=1.5mm]}] (node3) to node [right,swap,yshift=-2.5mm] {$f_{4,3}$} (node4);

\draw[red,dashed,-{Classical TikZ Rightarrow[length=1.5mm]}] (u1) -- (node1);
\draw[red,dashed,-{Classical TikZ Rightarrow[length=1.5mm]}] (u2) -- (node2);
\draw[red,dashed,-{Classical TikZ Rightarrow[length=1.5mm]}] (u3) -- (node3);
\draw[red,dashed,-{Classical TikZ Rightarrow[length=1.5mm]}] (u4) -- (node4);

\node(graph1)[below=of node3,yshift=2.7cm,xshift=-9.5cm]{$a) $ Fluid tanks system};

\node(graph1)[below=of node3,yshift=2.7cm,xshift=-0.2cm]{$b) $ Network representation};

\end{tikzpicture}
\vspace{-7mm}
    \caption{A four well-mixed fluid tanks whose dynamics can be expressed as the model \eqref{eq:model} considered for the identification. The physical system can be abstracted as the network in panel (b) where the edges are characterized by nonlinear functions, all the nodes can be excited (red arrows), and some nodes can be measured (green).}
    \label{fig:model}
\end{figure*}
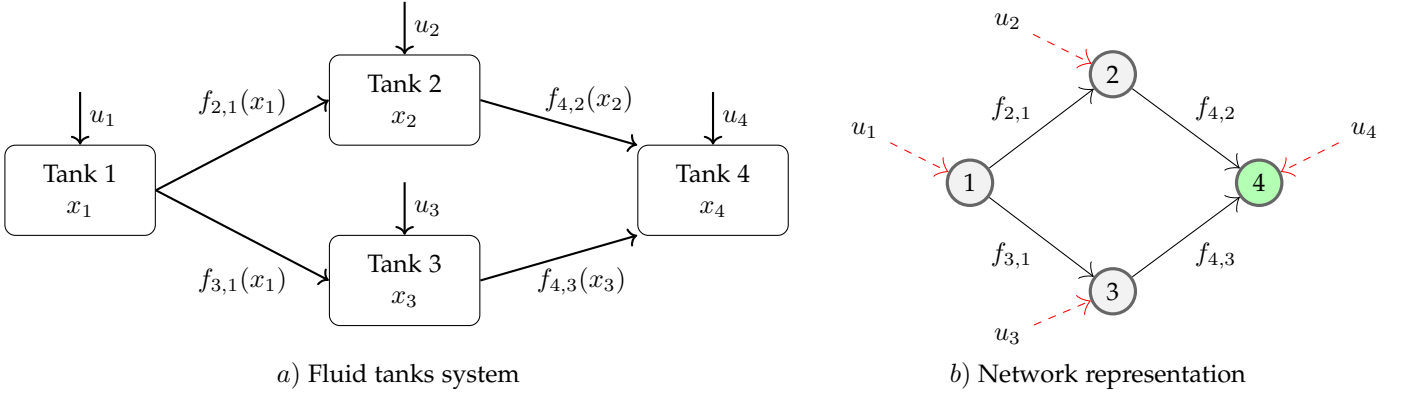

\section{Problem formulation}\label{sec:problem}

\subsection{Model class}

We consider a network modeled by a weakly connected digraph $\G=(\V,\E)$ with a set of nodes $\V=\{1,\ldots,n \}$ and a set of edges $\E\subseteq \V \times \V$, where a scalar state value $x_i\in\R$ is associated with each node $i\in\V$. The state of each node evolves according to the following dynamics:
\begin{equation}\label{eq:model}
    \dot x_i=\sum_{j\in\NN_i}f_{i,j}(x_j)+u_i,
\end{equation}
where $f_{i,j}$ is a nonlinear function associated with the edge $(i,j)$, $u_i$ is an  external signal associated only with excited nodes, and $\NN_i$ is the set of in-neighbors of $i$. This type of dynamics is a particular case of a general nonlinear network model \cite{zanudo2017structure} and is a continuous version of the discrete-time dynamics analyzed in \cite{vizuete2023nonlinear,vizuete2024nonlinear,vizuete2024partial}. Moreover, the model can be found in several applications like information distortion \cite{wang2022transmission}, nonlinear consensus algorithms \cite{liu2012discarded} and compartmental models with nonlinear flows. For instance, let us consider a four well-mixed fluid tanks (compartments) connected by nonlinear valves that transport a solute between tanks, where $x_i$ is the concentration of solute in tank $i$ and $u_i$ is an external injection of solute into tank $i$. The transport across each membrane is concentration-dependent and saturating so that the flux from tank $i$ into tank $j$ is a smooth nonlinear function of the source concentration $f_{j,i}(x_i)$ (e.g., hyperbolic tangent, logistic function). Fig.~\ref{fig:model} shows an illustration of the four tanks where the interactions can be abstracted as a nonlinear network with four nodes representing each tank.

We assume that we can measure several nodes such that the output of the system is of the form 
$$
\y=C\x,
$$
where $\x=(x_1,\ldots,x_n)^T$, $\y=(y_1,\ldots,y_n)^T$ and $C\in \{0,1\}^{p\times n} $ is a matrix with a single one and $n-1$ zeros in each row associated with the measurements of nodes. The nonzero entries of the matrix $C$ determine the set of measured nodes $\NN^m\subseteq \V$ in the system.  

The objective is to identify all the nonlinear functions $f_{i,j}$ based on the information obtained with measurements of some nodes of the network. Similarly to the common framework used for network identification \cite{weerts2018identifiability,hendrickx2019identifiability,vanwaarde2020necessary}, we make the following assumption:

\begin{assumption}\label{ass:graph_topology}
    The graph $G$ associated with the network is known, where an edge $(i,j)\in E$ implies $f_{i,j}\not\equiv  0$.
\end{assumption}

In this paper, we will consider the full excitation case. 

\begin{assumption}[Full excitation]\label{ass:full_excitation}
    All the nodes can be excited through the inputs $u_i$.
\end{assumption}

Fig.~\ref{fig:model} illustrates the full excitation case where
all the nodes are excited, and some nodes are measured.

\subsection{Identifiability}

In the full excitation case, the goal of the identifiability conditions is to determine which nodes must be measured to identify all the nonlinear functions associated with the edges. The identifiability analysis is based on the notion of identifiability in system identification \cite{ljung1999system}, where the objective is to determine if there is a unique set of local dynamics $f_{i,j}$ that leads to a given global behavior.

Ideally, we assume that, when we measure a node $i$, we can obtain the information of the function $F_i$:
\begin{equation}\label{eq:ideal_Fi_0}
x_i(t)=F_i(\x^0 ,\uu_{[0,t]},t),    
\end{equation}
where $\x^0=\x(0)$ and $\uu_{[0,t]}=\{\uu(s):0\leq s \leq t\} $ denotes the trajectory of $\uu(t)=(u_1(t),\ldots,u_n(t))^T$ up to time $t$. With a slight abuse of notation, along this work, we will denote \eqref{eq:ideal_Fi_0} only as:
\begin{equation}\label{eq:ideal_Fi}
x_i(t)=F_i(\x^0 ,\uu(t),t).    
\end{equation}
Note that \eqref{eq:ideal_Fi} is well-defined for any node, whether a node is measured or not. The function $F_i$ is the best information that we can obtain with the measurement of a node $i$ and provides the evolution of the state $x_i$ as a function of time, initial conditions and excitation signals from 0 to $t$. It is the global behavior in the context of identifiability, so that the objective is to determine whether the functions $f_{i,j}$ leading to $F_i$ are unique. In our setting, the initial conditions $\x^0$ are considered as variables whose values can be modified with appropriate excitation signals before considering the initial time for identification (this can be done through impulse inputs or short-duration signals). Without loss of generality, we consider the initial time $t_0=0$ for the analysis of the identifiability.
 If an edge $f_{i,j}$ is not identifiable with the information of $F_i$, it is not possible to identify it with the measurement of the node $i$. If an edge $f_{i,j}$ is identifiable with the information of $F_i$, it is possible that an approximation of $f_{i,j}$ be obtained with several experiments.

The following definitions, taken from the discrete-time setting in \cite{vizuete2024nonlinear}, are also valid in continuous time.

\begin{definition}[Set of measured functions]\label{def:set_measured_functions}
  Given a set of measured nodes $\mathcal{N}^m$, the partially ordered set of measured functions  $(F(\mathcal{N}^m),\le)$ associated with $\mathcal{N}^m$ is given by:
    $$
    F(\mathcal{N}^m):=\{F_i\;|\;i\in \mathcal{N}^m\},
    $$
    with $F_i\le F_j$ if $i\le j$.
\end{definition}

We say that a digraph $G$ and a partially ordered set of functions $(\{ f \},\le)=\{f_{i,j}\in \F\;|\;(i,j)\in E\}$ with $f_{i,j}\le f_{m,n}$ if $i\le m$ and $j\le n$, generate $F(\mathcal{N}^m)$ if the functions $F_i\in F(\mathcal{N}^m)$ are obtained \linebreak through \eqref{eq:model}. In the rest of the paper, we will refer to $F(\mathcal{N}^m)$ and $\{ f \}$ only as a set of measured functions and a set of functions, respectively, since the ordering of the functions depends on the labeling of the nodes, which is usually arbitrary and does not play a significant role.

Since the identifiability problem might not be well-posed for all the possible functions (e.g., the solution of the ODE does not even really exist for some set of functions), 
we restrict the problem to  a certain class of functions $\F$. In this case, we assume that the functions associated with the edges belong to $\F$ and the identifiability problem is considered only among the functions belonging to $\F$.

\begin{definition}[Identifiability]\label{def:math_identifiability}
   Given a set of functions $\{ f \}=\{f_{i,j}\in \F\;|\;(i,j)\in E\}$ that generates $F(\NN^m)$ and any other set of functions $\{ \tilde f \}=\{\tilde f_{i,j}\in\F\;|\;(i,j)\in E\}$ that generates $\tilde F(\NN^m)$. An edge $f_{i,j}$ is identifiable in a class $\F$ if $
    F(\NN^m)=\tilde F(\NN^m)
    $ implies that $f_{i,j}=\tilde f_{i,j}$ for any $\tilde f_{i,j}$.
    A network $G$ is identifiable in a class $\F$ if 
    $
    F(\NN^m)=\tilde F(\NN^m)
    $
    implies that
    $
    \{ f \}=\{ \tilde f \}
    $ for any $\{ \tilde f \}$.
\end{definition}

In this work, we will consider analytic entire functions (i.e., for all $\hat x\in\R$, their Taylor series around $\hat x$ is convergent in a neighborhood of $\hat x$) \footnote{In \cite{vizuete2024nonlinear}, the identifiability analysis was performed for the class of twice continuously differentiable functions $C^2$. However, in the derivation of algorithms for identification in Section~\ref{sec:algo}, we use higher order derivatives of the functions, and for this reason we need to particularize the identifiability conditions for the smaller class of analytic functions.}.
In addition, since the offset of a function could make the identifiability problem unsolvable when a node has two or more in-neighbors \cite[Proposition 2]{vizuete2024nonlinear}, we will consider that $f(0)=0$ \cite{vizuete2023nonlinear}, that could be interpreted as the difference with respect to the steady state.

\begin{definition}[Class of functions $\Fone$]
Let $\Fone$ be the class of functions $f:\R\to\R$ with the following properties:
\begin{enumerate}
    \item $f$ is analytic in $\R$.
    \item $f(0)=0$.
\end{enumerate}
\end{definition}

\section{Identifiability conditions for trees}\label{sec:trees}
Since paths play an important role for the identifiability of more complex network topologies \cite{hendrickx2019identifiability}, we will first focus on the identifiability of path graphs and hence of trees. Prior to that, we introduce the following definitions associated with digraphs.
\begin{definition}[Source and sink]
    A source is a node with no incoming edge. A sink is a node with no outgoing edge.
\end{definition}
Every DAG, including paths and trees, has at least a source and a sink.

The following proposition provides conditions for the identifiability of path graphs in two different frameworks.

\begin{proposition}[Path graphs]\label{prop:paths}
Under Assumptions \ref{ass:graph_topology} and \ref{ass:full_excitation}, a path graph is identifiable in the class $\Fone$, if and only if the sink is measured. Moreover, the identifiability remains true even if we use zero initial conditions and constant excitation signals, or use multiple initial conditions and zero excitation signals.
\end{proposition}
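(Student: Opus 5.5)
Label the path $1\to 2\to\cdots\to n$, so node $1$ is the source and node $n$ the sink. The dynamics are $\dot x_1=u_1$ and $\dot x_i=f_{i,i-1}(x_{i-1})+u_i$ for $i\ge 2$.

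\emph{Necessity.} Suppose the sink is not measured. Then $x_n$ never enters any measured $F_i$, because node $n$ has no outgoing edge. Replace $f_{n,n-1}$ by $\tilde f_{n,n-1}=2f_{n,n-1}$, which still lies in $\Fone$ and is not identically zero. Every $F_i$ with $i<n$ is unchanged, so $F(\NN^m)=\tilde F(\NN^m)$ while $\{f\}\neq\{\tilde f\}$. Hence the path is not identifiable.

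\emph{Sufficiency.} I would proceed by induction along the path, peeling off one edge at a time from the sink. The key identity is obtained by differentiating $F_n$ in time at $t=0$:
$$\dot x_n(0)=f_{n,n-1}(x^0_{n-1})+u_n(0).$$
The map $x^0_{n-1}\mapsto f_{n,n-1}(x^0_{n-1})$ is therefore read off directly from $F_n$ by varying the initial condition. This works for the multiple-initial-condition, zero-input version of the statement.

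Knowing $f_{n,n-1}$, I then recover $f_{n-1,n-2}$ from the second derivative:
$$\ddot x_n(0)=f_{n,n-1}'(x^0_{n-1})\bigl(f_{n-1,n-2}(x^0_{n-2})+u_{n-1}(0)\bigr)+\dot u_n(0).$$
Since $f_{n,n-1}\not\equiv0$ is analytic, $f'_{n,n-1}$ is nonzero except at isolated points. Choosing $x^0_{n-1}$ where $f'_{n,n-1}\ne0$ therefore gives $f_{n-1,n-2}(x^0_{n-2})$ for every $x^0_{n-2}$. In general, the $k$-th derivative $x_n^{(k)}(0)$ has the form $\prod_{j} f'_{j,j-1}(x^0_{j-1})\cdot f_{n-k+1,n-k}(x^0_{n-k})$ plus terms involving only edges already identified (and input derivatives). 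All these are evaluated at $x^0_{n-k}$ free, and the prefactor is nonzero for generic initial conditions by analyticity. This triangular structure yields uniqueness by induction; comparing $\{f\}$ and $\{\tilde f\}$ with $F_n=\tilde F_n$ gives equality edge by edge.

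\emph{Zero initial conditions and constant inputs.} Here I would use analyticity in the input level instead. With $\x^0=0$ and $u_i\equiv c_i$, we have $x_1(t)=c_1 t$, and the $k$-th derivative of $x_n$ at $t=0$ becomes a polynomial-type expression in $c$. Instead, I would use the $t$-Taylor coefficients of $x_n$ as functions of $(c_1,\dots,c_n)$. The leading coefficient in $t$ involving $c_1$ gives, after expansion, all Taylor coefficients of $f_{2,1}$ composed down the path. Since $f(0)=0$, the coefficients at $0$ are well defined and the derivatives $f^{(m)}(0)$ can be recovered inductively. Analytic functions are determined by their Taylor coefficients at $0$.

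\emph{Main obstacle.} The main difficulty is this last step. Under constant inputs from zero, the state stays near $0$ for small $t$, so only Taylor data at $0$ is available. One must show that the map from the coefficients $\{f^{(m)}_{i,i-1}(0)\}$ to the coefficients of $F_n$ is injective, including the degenerate case $f'_{i,i-1}(0)=0$ that kills the linear propagation.

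To handle this, I would first use $c_{n-1}$ alone to identify $f_{n,n-1}$ fully: $\dot x_{n-1}=c_{n-1}$ gives $x_{n-1}=c_{n-1}t$, and then $x_n=\int_0^t f_{n,n-1}(c_{n-1}s)\,ds$ as a function of $(c_{n-1},t)$ determines $f_{n,n-1}$ on all of $\R$. Next, using $c_{n-2}$, the quantity $x_n$ determines $\int_0^t f_{n,n-1}(x_{n-1}(s))\,ds$, hence $f_{n,n-1}(x_{n-1}(t))$. Choosing a suitable offset $c_{n-1}$ ensures that $f_{n,n-1}$ is locally invertible along the trajectory. This recovers $x_{n-1}(t)$, then $f_{n-1,n-2}$ by the same argument, and so on inductively.
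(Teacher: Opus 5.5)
Your necessity argument is correct. Your treatment of the zero-input, multiple-initial-condition setting is also correct, and it takes a genuinely different route from the paper.

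\textbf{Zero inputs, multiple initial conditions.} The paper never differentiates in $t$. It keeps the integral identity $\int_0^t f_{n,n-1}(x_{n-1})\,d\tau=\int_0^t \tilde f_{n,n-1}(\tilde x_{n-1})\,d\tau$ and identifies $f_{n,n-1}$ by exciting only $x^0_{n-1}$. It then writes $x_{n-1}=g_{n,n-1}+x^0_{n-1}$, with $g_{n,n-1}$ independent of $x^0_{n-1}$. Applying Lemma~\ref{lemma:identification_one_function} to $f_{n,n-1}(z+g_{n,n-1})=f_{n,n-1}(z+\tilde g_{n,n-1})$, with $z$ free, gives $g_{n,n-1}=\tilde g_{n,n-1}$. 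That is the same integral identity one edge upstream.

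Your triangular expansion of $x_n^{(k)}(0)$ is instead the mechanism behind Algorithm~\ref{alg:path_graph}, and it is sound. The non-leading terms only involve edges at depth at most $k-1$. The prefactor can be made nonzero one coordinate at a time; note this needs $f(0)=0$ together with $f\not\equiv 0$ to guarantee $f'\not\equiv 0$.

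Your route is constructive and avoids the functional-equation lemma. The paper's route covers both experimental settings with one argument, since only the shift variable changes from $x^0_{n-1}$ to $u_{n-1}t$. It also identifies the whole upstream signal $g_{n,n-1}$, which is the form reused for DAGs.

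\textbf{Zero initial conditions, constant inputs: the gap.} Here you end up on the paper's route, but the inversion step does not go through as you justify it. Local invertibility of $f_{n,n-1}$ at $x_{n-1}(t)$ only excludes competitors near $x_{n-1}(t)$. Since $x_{n-1}(0)=0$ is forced, no offset avoids a possible critical point at the start.

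Here is a concrete failure. Take $f_{n,n-1}(x)=x^2$, keep all other edges except $f_{n-1,n-2}$, and set $\tilde f_{n-1,n-2}=-f_{n-1,n-2}$. On every experiment with $c_{n-1}=0$ this gives $\tilde x_{n-1}=-x_{n-1}$ and identical $f_{n,n-1}(x_{n-1}(t))$. Yet $f'_{n,n-1}(x_{n-1}(t))\neq 0$ for small $t>0$ whenever $x_{n-1}\not\equiv 0$, so local invertibility along the trajectory holds and still does not decide.

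What rules this out is that $c_{n-1}$ enters additively: $x_{n-1}=c_{n-1}t+g$ and $\tilde x_{n-1}=c_{n-1}t+\tilde g$, where $g,\tilde g$ do not depend on $c_{n-1}$ and vanish at $t=0$. To close the step:
\begin{itemize}
\item Pick $w^*$ with $f'_{n,n-1}(w^*)\neq 0$, and $\varepsilon>0$ such that $f_{n,n-1}$ is injective on $(w^*-\varepsilon,w^*+\varepsilon)$.
\item Fix the upstream inputs and take $t>0$ small enough that $|g-\tilde g|<\varepsilon$.
\item Set $c_{n-1}=(w^*-g)/t$. Then $x_{n-1}(t)=w^*$ and $\tilde x_{n-1}(t)$ lies in the injectivity interval.
\item This forces $g=\tilde g$ for small $t$, hence for all $t$ by analyticity.
\end{itemize}
Equivalently, $z=c_{n-1}t$ sweeps $\mathbb{R}$ independently of $g,\tilde g$. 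So $f_{n,n-1}(z+g)=f_{n,n-1}(z+\tilde g)$ for all $z$, which is exactly the hypothesis of Lemma~\ref{lemma:identification_one_function} used in the paper. With this repair your induction goes through.
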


Before presenting the proof of Proposition~\ref{prop:paths}, we recall a
technical result that will be used in the proof.
\begin{lemma}[Lemma 4 \cite{vizuete2023nonlinear}]\label{lemma:identification_one_function}
Let $f:\R\to\R$ and $g,\tilde g:\R^m\to \R$ be three non-zero analytic functions, with $g(0)=\tilde{g}(0)=0$. If, for all $x\in\R$, $y\in\R^m$, the functions $f$, $g$ and $\tilde g$ satisfy
$$
f(x+g(y_1,\ldots,y_m))=f(x+\tilde{g}(y_1,\ldots,y_m)),
$$
then either $g=\tilde g$ or $f$ is constant.
\end{lemma}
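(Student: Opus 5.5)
The plan is to reduce the hypothesis to a statement about the periods of $f$ and then use a connectedness argument. Continuity of $f$, $g$ and $\tilde g$ should be all that is needed; analyticity enters only through continuity.

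\textbf{Step 1: each value $d(y)$ is a period of $f$.} Fix $y\in\R^m$ and set $d(y):=g(y)-\tilde g(y)$. Since the identity holds for every $x\in\R$, substitute $z=x+\tilde g(y)$. As $x$ ranges over $\R$, so does $z$, and the hypothesis becomes
$$
f(z+d(y))=f(z)\quad\text{for all } z\in\R .
$$
Hence every value $d(y)$ lies in the set of periods
$$
P:=\{p\in\R:\ f(z+p)=f(z)\ \forall z\in\R\}.
$$

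\textbf{Step 2: structure of $P$.} The set $P$ is an additive subgroup of $\R$. It is closed, because if $p_k\to p$ with $p_k\in P$, then continuity of $f$ gives $f(z+p)=\lim_k f(z+p_k)=f(z)$.

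It is a standard fact that a closed subgroup of $\R$ is either $\R$ itself or $c\mathbb{Z}$ for some $c\ge 0$. Suppose $P=\R$. Then $f(z)=f(0)$ for all $z$, so $f$ is constant, which is the second alternative of the lemma. Otherwise $P=c\mathbb{Z}$ is discrete.

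\textbf{Step 3: connectedness.} Assume $f$ is nonconstant, so that $P=c\mathbb{Z}$. The map $d=g-\tilde g:\R^m\to\R$ is continuous and takes values in the discrete set $c\mathbb{Z}$. Since $\R^m$ is connected, $d(\R^m)$ is a connected subset of $c\mathbb{Z}$, hence a single point. Because $g(0)=\tilde g(0)=0$, we have $d(0)=0$, so $d\equiv 0$, that is, $g=\tilde g$.

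Together with the first alternative, this gives the dichotomy claimed in the lemma.

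\textbf{Main obstacle.} The only step needing care is the claim in Step 2 that a closed proper subgroup of $\R$ is discrete. The argument is as follows. If $P\neq\{0\}$, let $c=\inf(P\cap(0,\infty))$. If $c>0$, then $c\in P$ by closedness, and $P=c\mathbb{Z}$ by the division algorithm. If $c=0$, then $P$ contains arbitrarily small positive elements, so it is dense in $\R$; being closed, it equals $\R$.

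The hypotheses that $g$ and $\tilde g$ are nonzero are not needed for this argument.
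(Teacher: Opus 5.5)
Your proof is correct and complete. The paper does not prove this lemma: it is imported from \cite{vizuete2023nonlinear} and used as a black box, so there is no in-paper argument to compare yours against. As a self-contained proof it checks out:
\begin{itemize}
\item the substitution $z=x+\tilde g(y)$ gives $f(z+d(y))=f(z)$ for all $z$;
\item the period set $P$ is a subgroup, and it is closed by continuity of $f$;
\item your classification of closed subgroups covers every case, including $P=\{0\}$, i.e.\ $c=0$;
\item connectedness of $\R^m$ together with $d(0)=0$ forces $d\equiv 0$ once $P$ is discrete.
\end{itemize}

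Your route also buys generality. Analyticity and the nonvanishing of $f,g,\tilde g$ play no role. The condition $g(0)=\tilde g(0)=0$ can be weakened to agreement at a single point of a connected domain. It also makes explicit where the content of the lemma lies, which the paper's phrasing in the proof of Proposition~\ref{prop:paths} obscures. A single relation $f_{n,n-1}(z)=f_{n,n-1}(z+b-a)$ is not by itself incompatible with analyticity: $\sin\in\Fone$ is periodic. What rules it out is your Step~3, i.e.\ the whole range of $g-\tilde g$, not one value of it.

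You can also shorten the argument and drop even the continuity of $f$. The image $d(\R^m)$ is a connected subset of $\R$ containing $0$. So if $d\not\equiv 0$, it contains $[0,\delta]$ or $[-\delta,0]$ for some $\delta>0$. A subgroup of $\R$ containing such an interval contains $[-\delta,\delta]$, hence all of $\R$. Therefore $P=\R$ and $f(z)=f(0)$ for every $z$. This bypasses the closedness of $P$ and the classification of closed subgroups altogether, and shows that no regularity of $f$ is needed, only continuity of $g-\tilde g$.
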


\begin{myproof}{Proposition~\ref{prop:paths}}
    Let us assume a path graph with $n\ge 2$ nodes where we use the labeling 1 for the source and $n$ for the sink. The measurement of the sink $n$ is necessary since the last edge $f_{n,n-1}$ only affects the sink in the graph. 
    The measurement of the sink provides an output of the form
    \begin{align}    x_n(t)&=F_n(\x^0,\uu,t)\label{eq:sink_path}\\
    &=\int_0^t f_{n,n-1}(x_{n-1}(\tau))d\tau+u_nt+x_n^0.    
    \end{align}
    Notice that if $x_{n-1}(t)$ is an analytic function, then $\int_0^t f_{n,n-1}(x_{n-1}(\tau))d\tau$ is also analytic \cite[Proposition 2.2.3]{krantz2002primer} and hence, $F_n(\x^0,\uu,t)$ is also analytic in the variable $t$. Since $u_1(t)=u_1$, the output of the source $x_1(t)=u_1t+x_1^0$ is analytic, and hence $x_2(t)$ is also analytic. Applying a similar reasoning, we have that
    all the functions $F_i(\x^0,\uu,t)$ are analytic for all $i\in\V$.
    Let us assume that there exists a set of functions $\{ \tilde f_{i,j}\}$ that generate $\tilde F_n(u,t)$ such that $F_n=\tilde F_n$. Then,
    \begin{equation}\label{eq:before_cases}
    \int_0^t f_{n,n-1}(x_{n-1}(\tau))d\tau=\int_0^t \tilde f_{n,n-1}(\tilde x_{n-1}(\tau))d\tau.    
    \end{equation}

    \noindent\textbf{1) Identifiability with zero initial conditions}

    \noindent In \eqref{eq:before_cases}, if we set to zero all the initial conditions and all the input signals except by $u_{n-1}$, we obtain
    $$
    \int_0^t f_{n,n-1}(u_{n-1}\tau )d\tau=\int_0^t \tilde f_{n,n-1}(u_{n-1}\tau)d\tau,
    $$
    which implies $f_{n,n-1}=\tilde f_{n,n-1}$ from the Fundamental Theorem of Calculus.  The identifiability of $f_{n,n-1}$ implies that
    $$
    f_{n,n-1}(F_{n-1}(\x^0,\uu,t))=f_{n,n-1}(\tilde F_{n-1}(\x^0,\uu,t)),
    $$
    which is equivalent to
    \begin{multline*}
    f_{n,n-1}\prt{\int_0^t f_{n-1,n-2}(x_{n-2}(\tau))d\tau+u_{n-1}t+x_{n-1}^0}=\\f_{n,n-1}\prt{\int_0^t \tilde f_{n-1,n-2}(x_{n-2}(\tau))d\tau+u_{n-1}t+x_{n-1}^0};    
    \end{multline*}
    \begin{multline}\label{eq:function_g}
        f_{n,n-1}\prt{g_{n,n-1}(\x^0,\uu,t)+u_{n-1}t+x_{n-1}^0}=\\f_{n,n-1}\prt{\tilde g_{n,n-1}(\x^0,\uu,t)+u_{n-1}t+x_{n-1}^0},
    \end{multline}
    where $g_{n,n-1}(\x^0,\uu,t)=\int_0^t f_{n-1,n-2}(x_{n-2}(\tau))d\tau$ and $\tilde g_{n,n-1}(\x^0,\uu,t)=\int_0^t \tilde f_{n-1,n-2}(x_{n-2}(\tau))d\tau$.
    Let us assume that there exists a nonzero $(\x^{0,*},\uu^*,\tau^*)\in\R^{2n+1}$
 such that $g_{n,n-1}(\x^{0,*},\uu^*,\tau^*)=a$ and $\tilde g_{n,n-1}(\x^{0,*},\uu^*,\tau^*)=b$ with $a\neq b$. Then we get
 \begin{equation}\label{eq:u_and_x}
    f_{n,n-1}(a+u_{n-1}\tau^*+x_{n-1}^0)=f_{n,n-1}(b+u_{n-1}\tau^*+x_{n-1}^0). 
 \end{equation}
 If we set $x_{n-1}^0=0$, we have
 $$
    f_{n,n-1}(a+u_{n-1}\tau^*)=f_{n,n-1}(b+u_{n-1}\tau^*) \quad \text{for all } u_{n-1}\in\R,
    $$ 
    which is equivalent to
    \begin{equation}\label{eq:z}
    f_{n,n-1}(z)=f_{n,n-1}(z+b-a) \quad \text{for all } z\in\R.    
    \end{equation}
    But according to Lemma~\ref{lemma:identification_one_function}, this is not possible since the function $f_{n,n-1}$ is analytic, so that we get a contradiction and $g_{n,n-1}(\x^0,\uu,t)=\tilde g_{n,n-1}(\x^0,\uu,t)$. Then, we obtain
    $$
    \int_0^t f_{n-1,n-2}(x_{n-2}(\tau))d\tau=\int_0^t \tilde f_{n-1,n-2}(\tilde x_{n-2}(\tau))d\tau,
    $$
    and we prove that $f_{n-1,n-2}=\tilde f_{n-1,n-2}$ by using a similar approach. Notice that we can use the same procedure until we reach the source such that all the path graph can be identified.

    \noindent\textbf{2) Identifiability with multiple initial conditions and zero excitation signals}

    \noindent To prove the second case, we just make two modifications in the proof of case 1). First, in eq.~\eqref{eq:before_cases}, if we set to zero all the excitation signals and initial conditions, except for $x_{n-1}^0$, we get
    $$
     f_{n,n-1}(x_{n-1}^0 )= \tilde f_{n,n-1}(x_{n-1}^0), \text{ for all } x_{n-1}^0\in \R,
    $$
    which yields $f_{n,n-1}=\tilde f_{n,n-1}$. Second, in eq.~\eqref{eq:u_and_x}, if we set $u_{n-1}=0$, we obtain
   $$
    f_{n,n-1}(a+x_{n-1}^0)=f_{n,n-1}(b+x_{n-1}^0) \quad \text{for all } x_{n-1}^0\in\R,
    $$
    which is equivalent to \eqref{eq:z}. The rest of the proof is similar to the case 1).
\end{myproof}

Proposition~\ref{prop:paths} provides two different approaches for potential identification algorithms where we could use nonzero constant excitation signals, or multiple initial conditions and zero excitation signals. This is due to the fact that the mapping~\eqref{eq:ideal_Fi} depends on two different types of variables: initial conditions and excitation signals.
Based on Proposition~\ref{prop:paths}, we derive conditions for the identifiability of trees.

\begin{theorem}[Trees]
\label{th:Trees}
Under Assumptions  \ref{ass:graph_topology} and \ref{ass:full_excitation}, a tree is identifiable in the class $\Fone$ if and only if all the sinks are measured. Moreover, the identifiability remains true even if we use zero initial conditions and constant excitation signals, or use multiple initial conditions and zero excitation signals.
\end{theorem}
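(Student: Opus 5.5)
Necessity comes first. Take a sink $s$ and an in-neighbor $j$ of $s$. Since $s$ has no outgoing edges, the edge function $f_{s,j}$ enters only the dynamics of $x_s$. No other state depends on $x_s$, so no $F_i$ with $i\neq s$ is affected by $f_{s,j}$. If $s$ is not measured, we can replace $f_{s,j}$ by any other nonzero $\tilde f_{s,j}\in\Fone$ and leave $F(\NN^m)$ unchanged. Hence every sink must be measured.

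For sufficiency, I will reduce to Proposition~\ref{prop:paths}. I take \emph{tree} in the usual sense: the underlying undirected graph is acyclic. The key structural point is that in such a digraph there is a unique directed path between any two nodes. Because of this, a node has no two parallel routes that could mix information.

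Fix a sink $s$ and consider its in-neighbors $j_1,\dots,j_k$. Then
\begin{equation*}
x_s(t)=\sum_{\ell=1}^k\int_0^t f_{s,j_\ell}(x_{j_\ell}(\tau))\,d\tau+u_st+x_s^0 .
\end{equation*}
The subtrees feeding the different $j_\ell$ are disjoint, since otherwise the undirected graph would contain a cycle. So each $x_{j_\ell}$ depends only on the initial conditions and inputs of the ancestors of $j_\ell$, and these variable sets are pairwise disjoint. First I set every input and initial condition to zero except those of $j_\ell$. All other terms vanish because $f(0)=0$ and zero states remain zero. The argument in case 1) (or case 2)) of Proposition~\ref{prop:paths} then gives $f_{s,j_\ell}=\tilde f_{s,j_\ell}$.

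Next, subtract these known terms and differentiate in $t$:
\begin{equation*}
\sum_\ell f_{s,j_\ell}(x_{j_\ell})=\sum_\ell f_{s,j_\ell}(\tilde x_{j_\ell}).
\end{equation*}
Now I isolate one branch $\ell$ by zeroing all variables of the other branches. The other terms then become $f(0)=0$ on both sides, because $x_{j_m}\equiv\tilde x_{j_m}\equiv 0$ when all ancestor variables of $j_m$ are zero. This leaves $f_{s,j_\ell}(g+u_{j_\ell}t+x^0_{j_\ell})=f_{s,j_\ell}(\tilde g+u_{j_\ell}t+x^0_{j_\ell})$. The same shift argument via Lemma~\ref{lemma:identification_one_function} as in \eqref{eq:z} yields $g=\tilde g$, which gives the integrals of the incoming edges of $j_\ell$. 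Repeating the same two steps recursively, node $j_\ell$ plays the role of a sink with disjoint incoming subtrees, and this identifies every ancestor edge of $s$.

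Every edge of the tree lies on some path ending at a sink. So once all sinks are measured and the procedure above is run from each of them, every edge is identified. Edges shared by several sinks' ancestor sets cause no problem, since they are simply identified more than once. Both variants carry over: constant inputs with zero initial conditions, or initial conditions with zero inputs, because only the isolated variables of a single node are ever used.

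The main obstacle is the separation step: showing that the sum over in-neighbors splits into individual equalities. This relies on the disjointness of ancestor sets in a tree together with $f(0)=0$. Without disjointness, as in a general DAG with two paths from one node to another, zeroing variables cannot isolate a single branch.
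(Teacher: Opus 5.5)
Your proof is correct and follows the paper's argument. Necessity holds because a sink's incoming edges affect only that sink. For sufficiency, you zero the inputs and initial conditions of all but one incoming branch at each node, using $f(0)=0$, so that the sink output reduces to that of a single path where the argument of Proposition~\ref{prop:paths} applies. You also make explicit what the paper leaves implicit: in a tree the ancestor sets of distinct in-neighbors are disjoint, and the isolation is done recursively node by node.
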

\begin{proof}
The measurement of all the sinks is necessary since their incoming edges only affect them. Let us select a sink $s$ of the tree. If, at each node $i$, we set to zero the inputs and initial conditions of all the branches that arrive to $i$ except by one, in this particular setting, the function $F_s$ associated with the measurement of the sink $s$:
$$
x_s(t)=F_s(\x^0,\uu,t),
$$
can be linked to a single path graph, so that we can use Proposition~\ref{prop:paths} to guarantee the identifiability of all the edges in that path. By repeating the same procedure for the other edges and sinks, we prove the identifiability of all the tree.  
\end{proof}

\section{Identifiability of Directed acyclic graphs}\label{sec:DAG}

Unlike the case of trees, the identifiability of general DAGs in the linear case might require the measurement of the sinks and other additional nodes in discrete time \cite{hendrickx2019identifiability}. This is due to the fact that in DAGs,  there can exist several paths connecting two nodes that do not allow us to distinguish the information coming from the different paths. 
For instance, let us consider the DAG in Fig.~\ref{fig:model} where we assume that the functions $f_{4,2}$ and $f_{4,3}$ have already been identified.
Ideally, the measurement of node 4 provides the information:
$$
x_4(t)=\int_0^tf_{4,2}(x_2(\tau))d\tau+\int_0^tf_{4,3}(x_3(\tau))d\tau+u_4t+x_4^0.
$$
If the functions $f_{4,2}$ and $f_{4,3}$ are linear, i.e. of the form $f_{4,2}(x)=k_{42}x$ and $f_{4,3}(x)=k_{43}x$, we would have
\begin{align}
    x_4(t)\!&=k_{42}\int_0^tx_2(\tau)d\tau + k_{43}\int_0^tx_3(\tau)d\tau+u_4t+x_4^0\nonumber\\
    &=k_{42}\int_0^t\prt{\int_0^\tau f_{2,1}(x_1(\theta))d\theta+u_2\tau+x_2^0}d\tau+\nonumber\\
    &\hspace{4.5mm}k_{43}\int_0^t\prt{\int_0^\tau f_{3,1}(x_1(\theta))d\theta\!+\!u_3\tau\!+\!x_3^0}d\tau\!+\!u_4t\!+\!x_4^0.\label{eq:measurement_4}
\end{align}
Notice that the functions $\tilde f_{2,1}=f_{2,1}+\frac{1}{k_{42}}\psi$ and $\tilde f_{3,1}=f_{3,1}-\frac{1}{k_{43}}\psi$ also satisfy \eqref{eq:measurement_4} for any function $\psi$ in the class $\Fone$,  so that the edges $f_{2,1}$ and $f_{3,1}$ are not unique and cannot be identified. This is because of the superposition principle of linear functions, which mixes the information coming from different paths \cite{vizuete2024nonlinear}.
Hence, we would need to measure an additional node to identify the network. Nevertheless, if the functions $f_{4,2}$ and $f_{4,3}$ are not linear, the derivatives with respect to $x_2$ and $x_3$, respectively, would allow us to isolate the information coming from each path \cite{vizuete2024nonlinear}.
For this reason, in order to relax these conditions, we will focus on the identifiability of DAGs in the class of pure nonlinear functions \cite{vizuete2023nonlinear}.

\begin{definition}[Class of functions $\Ftwo$]
Let $\Ftwo$ be the class of functions $f:\R\to\R$ with the following properties:
\begin{enumerate}
    \item $f$ is analytic in $\R$.
    \item $f(0)=0$.
    \item The associated Taylor series $f(x)=\sum_{n=1}^\infty a_nx^n$ contains at least one coefficient $a_n\neq 0$ with $n>1$.    
\end{enumerate}
\end{definition}

In eq.~\eqref{eq:function_g} corresponding to the proof of Proposition~\ref{prop:paths}, the identifiability problem requires to show that the function $g$ is unique. Using the same terminology as in \cite[Definition 6]{vizuete2024nonlinear}, we will say that the function $g$ is identifiable if it is the only possible function that leads to the global behavior determined by the measured functions $F_i$. In addition, we provide an extension of \cite[Lemma 4]{vizuete2024nonlinear} to the continuous-time dynamics.
\begin{lemma}[Removal of a node]\label{lemma:removal_node}
    Consider a DAG $G$ and a node $j$ with only one path to its out-neighbor $i$. Let us assume that the nodes $i$ and $j$ have been measured and the edge $f_{i,j}$ is identifiable. For constant excitation signals, the edges $f_{i,\ell}$ and the functions $g_\ell$ for $\ell\in\NN_i$ are identifiable in $G$ if they are identifiable in the induced subgraph   
    $G_{V\setminus\{ j\}}$.
\end{lemma}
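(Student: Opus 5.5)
Since $j$ is measured, the trajectory $x_j(t)=F_j(\x^0,\uu,t)$ is a known signal, and since $f_{i,j}$ is identifiable, the contribution $\int_0^t f_{i,j}(x_j(\tau))\,d\tau$ of the edge $(i,j)$ to $x_i$ is known as well. The idea is to subtract this known contribution from the measurement of $i$ and treat $x_j$ as a freely assignable quantity. We then show that what remains coincides with the measured behaviour of node $i$ in the induced subgraph $G_{V\setminus\{j\}}$, once $j$ is replaced by an external excitation.

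Concretely, the measurement of $i$ gives
\begin{equation*}
x_i(t)=\int_0^t f_{i,j}(x_j(\tau))\,d\tau+\sum_{\ell\in\NN_i\setminus\{j\}}\int_0^t f_{i,\ell}(x_\ell(\tau))\,d\tau+u_it+x_i^0 .
\end{equation*}
Suppose $\{\tilde f\}$ generates the same measured functions. Then $\tilde F_j=F_j$, so $\tilde x_j=x_j$, and $\tilde f_{i,j}=f_{i,j}$ by assumption. The first integral is therefore identical for both parameterizations and cancels in $F_i=\tilde F_i$, leaving
\begin{equation*}
\sum_{\ell\in\NN_i\setminus\{j\}}\int_0^t f_{i,\ell}(x_\ell(\tau))\,d\tau=\sum_{\ell\in\NN_i\setminus\{j\}}\int_0^t \tilde f_{i,\ell}(\tilde x_\ell(\tau))\,d\tau .
\end{equation*}
The left-hand side depends only on the nodes other than $j$, except through the downstream influence of $j$. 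Since $j$ has a single path to $i$, namely the direct edge $(i,j)$, no in-neighbor $\ell\neq j$ of $i$ is a descendant of $j$. Hence the trajectories $x_\ell$ for $\ell\in\NN_i\setminus\{j\}$, and all their ancestors, are generated by dynamics that never involve $x_j$. I would verify this by induction along a topological order of the ancestors of $\NN_i\setminus\{j\}$: none of them has $j$ as an ancestor, because otherwise there would be a second path from $j$ to $i$.

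As a consequence, on the relevant variables the functions $x_\ell$ coincide with those generated by $G_{V\setminus\{j\}}$ under the same constant inputs and initial conditions. Note, however, that $j$'s own in-neighbors still feed $j$. Removing $j$ only deletes the edges entering and leaving $j$, and those do not affect the $x_\ell$. The displayed equality is then exactly the statement that the sink-side behaviour of $i$ in $G_{V\setminus\{j\}}$, minus the term $u_it+x_i^0$, is the same for $\{f\}$ and $\{\tilde f\}$. The same reasoning applies to the functions $g_\ell$, which are integrals of upstream edge functions in the same way as $g_{n,n-1}$ in the proof of Proposition~\ref{prop:paths}. Identifiability of $f_{i,\ell}$ and $g_\ell$ in the subgraph then transfers to $G$.

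The main obstacle is the ancestry claim: that the hypothesis ``only one path to $i$'' really guarantees that $x_j$ influences $x_i$ only through $f_{i,j}$. The hypothesis must be read as: the only path from $j$ to $i$ is the edge $(i,j)$. If instead a longer path were allowed, the argument would need $j$'s effect along that path to be absorbed. I would handle that case by setting $u_j$ and $x_j^0$ to zero is not possible in general, since $x_j$ is driven by its in-neighbors. So I would restrict to the interpretation above and check it against the DAG structure. A secondary point is that constant excitations keep all signals analytic, as in Proposition~\ref{prop:paths}, so that the subgraph identifiability argument remains applicable.
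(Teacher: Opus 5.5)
Your proposal is correct and follows essentially the same route as the paper's appendix proof: you cancel the known term $\int_0^t f_{i,j}(x_j(\tau))\,d\tau$ using $\tilde x_j=x_j$ (the paper's $g_j=\tilde g_j$) and $\tilde f_{i,j}=f_{i,j}$, then observe that the trajectories of $\ell\in\NN_i\setminus\{j\}$ are unaffected by deleting $j$ (the paper's $\hat g_\ell=g_\ell$), so the reduced equation is exactly the identifiability problem at $i$ in $G_{V\setminus\{j\}}$. Your topological-order ancestry argument just makes explicit what the paper only asserts, and your worry about longer paths is unnecessary: since $i$ is an out-neighbor of $j$, the direct edge is always a path from $j$ to $i$, so the hypothesis already forces it to be the only one.
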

\begin{proof}
    The proof is differed to the Appendix.
\end{proof}

Analogous to Proposition~\ref{prop:paths}, the following theorem provides identifiability conditions for DAGs in two different frameworks.

\begin{theorem}[Directed acyclic graphs]\label{prop:DAGs}
Under Assumptions \ref{ass:graph_topology} and \ref{ass:full_excitation}, a DAG is identifiable in the class $\Ftwo$ if and only if all the sinks are measured. Moreover, the identifiability remains true even if we use zero initial conditions and constant excitation signals, or use multiple initial conditions and zero excitation signals.
\end{theorem}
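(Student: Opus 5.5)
Necessity is immediate and argued as in Theorem~\ref{th:Trees}: every edge entering a sink $s$ affects only $x_s$, so if $s$ is not measured those edges can be altered without changing any measured function. For sufficiency I would follow the discrete-time strategy of \cite{vizuete2024nonlinear}, adapted to continuous time, and proceed by induction on the number of nodes. The step that peels nodes off is Lemma~\ref{lemma:removal_node}, and the step that identifies a single edge is Lemma~\ref{lemma:identification_one_function}.

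\textbf{Edges entering a sink.} Fix a sink $s$ with in-neighbors $\ell\in\NN_s$. Equality $F_s=\tilde F_s$ gives, as in \eqref{eq:before_cases},
\begin{equation*}
\sum_{\ell\in\NN_s}\int_0^t f_{s,\ell}(x_\ell(\tau))\,d\tau=\sum_{\ell\in\NN_s}\int_0^t \tilde f_{s,\ell}(\tilde x_\ell(\tau))\,d\tau .
\end{equation*}
I would differentiate in $t$ and keep only the input $u_\ell$ (or the initial condition $x_\ell^0$), setting all other inputs and initial conditions to zero. Because $f(0)=0$ and the graph is acyclic, every $x_k$ that is not downstream of $\ell$ then stays at zero. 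Each $x_{\ell'}$ with $\ell'\neq\ell$ either stays at zero or is downstream of $\ell$, so the only term that survives at first order is $f_{s,\ell}(u_\ell t)$. I would repeat this choosing $\ell$ in a topological order so that already identified terms can be subtracted, which gives $f_{s,\ell}=\tilde f_{s,\ell}$. In the zero-input version I would use $x^0_\ell$ and evaluate at $t=0$, where the sum reduces to $f_{s,\ell}(x^0_\ell)$ since all other states are zero.

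\textbf{Going upstream.} The identified edges give $\sum_\ell f_{s,\ell}(x_\ell(t))=\sum_\ell f_{s,\ell}(\tilde x_\ell(t))$, where $x_\ell=g_\ell+u_\ell t+x_\ell^0$. This is where $\Ftwo$ enters. Differentiating with respect to $x^0_\ell$ (or $u_\ell$) isolates $f'_{s,\ell}(x_\ell)$, up to terms coming from paths through $\ell$. Since $f'_{s,\ell}$ is non-constant, the linear degeneracy that produced the counterexample \eqref{eq:measurement_4} cannot occur. Lemma~\ref{lemma:identification_one_function} applied to $f'_{s,\ell}$ then yields $g_\ell=\tilde g_\ell$, which means $x_\ell=\tilde x_\ell$, so $F_\ell$ becomes effectively measured. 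Now Lemma~\ref{lemma:removal_node} lets me delete the identified node and pass to the induced subgraph, where $\ell$ behaves as a measured node (a new sink, or a node whose in-edges are determined by $F_\ell$). The induction hypothesis then identifies the remaining edges. Both frameworks follow as in Proposition~\ref{prop:paths}: constant inputs with zero initial conditions, or initial conditions with zero inputs.

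\textbf{Main obstacle.} The hard step is separating the contributions of several in-neighbors $\ell,\ell'$ of $s$ when $\ell'$ is downstream of $\ell$, i.e.\ parallel paths. There $g_{\ell'}$ also depends on $x_\ell^0$, so the derivative with respect to $x_\ell^0$ mixes $f'_{s,\ell}$ and $f'_{s,\ell'}\,\partial x_{\ell'}/\partial x^0_\ell$. I would first handle in-neighbors that are maximal in the topological order, whose states depend on no other in-neighbor's excitation. For the others I would exploit that $f'_{s,\ell'}$ is non-constant: vary $x^0_{\ell'}$ independently while keeping $x^0_\ell$ fixed, giving a functional identity of the form $f'(z+a)=f'(z+b)$ for all $z$. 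As in \eqref{eq:z}, this forces $a=b$.
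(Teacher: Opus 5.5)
Your architecture matches the paper's: identify the in-edges of a sink from $\dot x_s$, use non-constancy of $f'_{s,\ell}$ (this is where $\Ftwo$ enters) with Lemma~\ref{lemma:identification_one_function} to get $g_\ell=\tilde g_\ell$, peel nodes with Lemma~\ref{lemma:removal_node}, and recurse upstream. The gap is in the step you flag as the main obstacle.

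Taking the in-neighbor that is last in a topological order is the right choice. But what makes it work is the reverse of your gloss: its excitation reaches no other in-neighbor of $s$, i.e.\ its only path to $s$ is the direct edge. The paper relies on such a node always existing. A node whose state depends on no other in-neighbor's excitation can still feed other in-neighbors. Its $x^0_\ell$-derivative then contains the mixing terms $f'_{s,\ell'}(x_{\ell'})\,\partial x_{\ell'}/\partial x^0_\ell$.

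Your treatment of ``the others'' does not work as stated. With zero inputs, differentiating $\sum_k f_{s,k}(x_k)=\sum_k f_{s,k}(\tilde x_k)$ in $x^0_\ell$ gives
\begin{equation*}
f'_{s,\ell}(x_\ell)-f'_{s,\ell}(\tilde x_\ell)=\sum_{\ell'}\Big[f'_{s,\ell'}(\tilde g_{\ell'}+x^0_{\ell'})\,\partial_{x^0_\ell}\tilde g_{\ell'}-f'_{s,\ell'}(g_{\ell'}+x^0_{\ell'})\,\partial_{x^0_\ell}g_{\ell'}\Big],
\end{equation*}
with $\ell'$ ranging over the in-neighbors of $s$ downstream of $\ell$. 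Varying $x^0_{\ell'}$ with $x^0_\ell$ fixed does not turn this into $f'(z+a)=f'(z+b)$, because of the unknown sensitivities $\partial_{x^0_\ell}g_{\ell'}$ and $\partial_{x^0_\ell}\tilde g_{\ell'}$. The only exception is when you already know $g_{\ell'}=\tilde g_{\ell'}$. In that case the right-hand side vanishes identically and no variation is needed.

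The missing idea is the paper's cancel-and-remove recursion. Take a single-path in-neighbor $\ell'$, and suppose $f_{s,\ell'}$ and $g_{\ell'}$ are identified as functions of all of $(\x^0,\uu,t)$. Then the whole term $\int_0^t f_{s,\ell'}(x_{\ell'}(\tau))\,d\tau$ is identical on both sides of \eqref{eq:equality_sum_functions} and cancels. Since $\ell'$ influences no other in-neighbor of $s$, what remains is exactly the identity for $G_{\V\setminus\{\ell'\}}$; this is the content of Lemma~\ref{lemma:removal_node}. In that reduced identity, the last remaining in-neighbor in topological order again has no remaining in-neighbor downstream of it. Iterating ``identify $f_{s,\ell}$, then $g_\ell$, cancel, remove'' therefore exhausts $\NN_s$ with no perturbation argument.

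Only after this does each $\ell\in\NN_s$, now with $F_\ell=\tilde F_\ell$, act as an effectively measured node from which the argument restarts upstream. Your sentence about deleting the node while $\ell$ ``behaves as a measured node'' conflates these two distinct uses.

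The same interleaving is what makes the constant-input case work. Your plan to identify all $f_{s,\ell}$ first by ``subtracting already identified terms'' needs $x_{\ell'}=\tilde x_{\ell'}$, not just $f_{s,\ell'}=\tilde f_{s,\ell'}$. In the interleaved order, exciting only $u_\ell$ for the current single-path in-neighbor makes the reduced sum collapse exactly to $\int_0^t f_{s,\ell}(u_\ell\tau)\,d\tau$. In the initial-condition framework, your evaluation at $t=0$ is fine for every in-neighbor.
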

\begin{proof}
Let us assume an arbitrary DAG $G$.
For constant excitation signals, the measurement of a sink $i$ provides an output of the form
\begin{align*}
x_i(t)&=F_i(\x^0,\uu,t)\\
    &=\int_{0}^t\sum_{j\in \NN_i}f_{i,j}(x_j(\tau))d\tau+u_it+x_i^0\\
    &=\sum_{j\in \NN_i}\prt{\int_{0}^t f_{i,j}(x_j(\tau))d\tau}+u_it+x_i^0.
\end{align*}
Let us assume that there exists a set of functions $\{ \tilde f_{i,j} \}$ that generate $\tilde F_i(\x^0,\uu,t)$ such that $F_i=\tilde F_i$. Then, we get
\begin{equation}\label{eq:equality_sum_functions}
\sum_{j\in \NN_i}\int_{0}^t f_{i,j}(x_j(\tau))d\tau=\sum_{j\in \NN_i}\int_{0}^t \tilde f_{i,j}(\tilde x_j(\tau))d\tau.    
\end{equation}
\textbf{1) Identifiability with zero initial conditions} \newline 
\noindent In any DAG, there is a node $\ell$ with only one path to any node $i$ \cite{vizuete2024nonlinear}. If we set to zero all the initial conditions and all the excitation signals except $u_\ell$, \eqref{eq:equality_sum_functions} becomes
$$
\int_{0}^t f_{i,\ell}(u_\ell \tau)d\tau=\int_{0}^t \tilde f_{i,\ell}(u_\ell \tau)d\tau,  
$$
which implies $f_{i,\ell}=\tilde f_{i,\ell}$ from the Fundamental Theorem of Calculus. If we take the derivative with respect to $u_\ell$ in  \eqref{eq:equality_sum_functions}, we get
\begin{equation}
\frac{d}{du_\ell}\prt{\int_{0}^t f_{i,\ell}(x_\ell(\tau))d\tau}
    =\frac{d}{du_\ell}\prt{\int_{0}^t  f_{i,\ell}(\tilde x_\ell(\tau))d\tau}. 
\end{equation}
Since the function $f_{i,\ell}$ is analytic with respect to all its arguments, we can use the Leibniz integral rule to exchange the integral and the derivative.
By using  $x_\ell(t)=g_\ell(\x^0,\uu,t)+u_\ell t+x_\ell^0$ we have
\begin{multline*}
\int_{0}^t\frac{\partial}{\partial u_\ell}\prt{ f_{i,\ell}(g_\ell(\x^0,\uu,\tau)+u_\ell\tau+x_\ell^0)}d\tau\\
    =\int_{0}^t\frac{\partial}{\partial u_\ell}\prt{  f_{i,\ell}(\tilde g_\ell(\x^0,\uu,\tau)+u_\ell\tau+x_\ell^0)}d\tau,    
\end{multline*}
which yields
\begin{multline}\label{eq:temporal_with_tau}
\int_{0}^t f'_{i,\ell}(g_\ell(\x^0,\uu,\tau)+u_\ell\tau+x_\ell^0)\tau d\tau
    =\\
    \int_{0}^t  f'_{i,\ell}(\tilde g_\ell(\x^0,\uu,\tau)+u_\ell\tau+x_\ell^0)\tau d\tau.     
\end{multline}
This implies
$$
f'_{i,\ell}(g_\ell(\x^0,\uu,\tau)+u_\ell\tau+x_\ell^0)=f'_{i,\ell}(\tilde g_\ell(\x^0,\uu,\tau)+u_\ell\tau+x_\ell^0).
$$
Since the functions $f'_{i,\ell}$, $g_\ell$ and $\tilde g_\ell$ are analytic, according to Lemma~\ref{lemma:identification_one_function}, either $g_\ell=\tilde g_\ell$ or $f'_{i,\ell}$ is constant. However, since each function $f_{i,j}$ is in $\Ftwo$, the derivative $f'_{i,\ell}$ cannot be constant, which implies $g_\ell=\tilde g_\ell$.
    Then, since we showed above that $f_{i,\ell}=\tilde f_{i,\ell}$, \eqref{eq:equality_sum_functions} becomes
    \begin{equation}\label{eq:new_identifiability_problem}
         \sum_{j\in \NN_i\setminus \{\ell \}}\int_{0}^t f_{i,j}(x_j(\tau))d\tau=\sum_{j\in \NN_i\setminus \{\ell \}}\int_{0}^t \tilde f_{i,j}(\tilde x_j(\tau))d\tau.  
    \end{equation}
    According to Lemma~\ref{lemma:removal_node}, the identifiability problem \eqref{eq:new_identifiability_problem} can be analyzed in the induced DAG $G_{\V\setminus \{ \ell\}}$, where there must be another in-neighbor of $i$ that has only one path to $i$. By using the same approach, we can identify all the incoming edges of $i$ and since $x_j(t)=\tilde x_j(t)$ for $j\in\NN_i$, we can use the same procedure for the other nodes until we reach the sources of the DAG that can be reached from the sink $i$. By measuring the other sinks, we guarantee the identifiability of all the network.
    
    \noindent \textbf{2) Identifiability with multiple initial conditions and zero excitation signals}

    \noindent For the proof of the second case, we make two modifications in the proof of case 1). First, in eq.~\eqref{eq:equality_sum_functions}, if we set to zero all the excitation signals and initial conditions, except for $x_\ell^0$, we obtain
    $$
    f_{i,\ell}(x_\ell^0)=\tilde f_{i,\ell}(x_\ell^0), \text{ for all } x_{\ell}^0\in \R,
    $$
    which guarantees $f_{i,\ell}=\tilde f_{i,\ell}$. Second, in eq.~\eqref{eq:equality_sum_functions}, we take the derivative with respect to $x_\ell^0$, which yields
    \begin{multline*}
\int_{0}^t f'_{i,\ell}(g_\ell(\x^0,\uu,\tau)+u_\ell\tau+x_\ell^0)d\tau
    =\\
    \int_{0}^t  f'_{i,\ell}(\tilde g_\ell(\x^0,\uu,\tau)+u_\ell\tau+x_\ell^0) d\tau,     
\end{multline*}instead of \eqref{eq:temporal_with_tau}. The rest of the proof is similar to the \linebreak case 1).
\end{proof}

Similarly to the identifiability conditions in the discrete-time case \cite{vizuete2024nonlinear}, the nonlinearity of the functions allows us to distinguish the information coming from different paths to the same node. This characteristic of the class $\Ftwo$ allows us to obtain weaker conditions compared to the linear case and will play an important role in the conception of an algorithm for identification.

\section{Algorithms for identification}
\label{sec:algo}
Given that the identifiability conditions for trees and DAGs have been established in the previous sections, we proceed to develop a practical algorithm for identification. Even if the identifiability conditions of Section~\ref{sec:trees} and \ref{sec:DAG} were derived for continuous-time dynamics, the information collected via an experimental setting comes from discrete measurements at specific time instants. For this reason, it is important to design an algorithm that is not only based on the theoretical identifiability conditions, but also takes into account the discrete nature of the measurements. Moreover, we will focus on the specific setting of multiple initial conditions (i.e. case 2) in Proposition \ref{prop:paths} and Theorem~\ref{prop:DAGs}). However, this does not exclude the possibility that further algorithms could be designed to use directly constant inputs. 

Similarly to \cite{mauroy2020koopman}, we make the following assumption on the functions $f_{i,j}$ in the network.

\begin{assumption}\label{ass:dic_fun}
	Each function $f_{i,j}$ in the network $\G$ is a finite linear combination of known dictionary functions
	
	\vspace{-3mm}
	\begin{align}
		\label{eq:dic_fun}
		f_{i,j} = \sum_{\ell=1}^{L_{ij}} \alpha_{ij\ell} \phi_{ij\ell},
	\end{align}
	so that the identification of the function $f_{i,j}$ is equivalent to the identification of the coefficients $\alpha_{ij\ell}$. 
\end{assumption}

Additionally, in the experimental procedures, we will use the higher-order time derivatives of measured states, which are supposed to be known from the ideal information corresponding to $F_i$ in \eqref{eq:ideal_Fi}. The practical algorithm we propose is a sequential multi-step approach, where a few edge functions are identified at each step. The  functions identified from the previous steps are used in the subsequent steps.

\subsection{Algorithm for identification of trees}
\label{sec:Algorithm}
In order to formulate an algorithm for the identification of trees, we consider for simplicity a path graph of $4$ nodes as in Fig. \ref{fig:path_graph_4}. The method can be easily extended to a path graph of any length. From Theorem~\ref{th:Trees}, it is necessary and sufficient to measure only node 4 (sink) of the graph to identify the dynamics on the edges.

\begin{figure}[!ht]
	\centering
	\begin{tikzpicture}
		[
		roundnodes/.style={circle, draw=black!60, fill=black!5, very thick, minimum size=1mm},roundnode/.style={circle, draw=white!60, fill=white!5, very thick, minimum size=1mm},roundnodes2/.style={circle, draw=black!60, fill=green!30, very thick, minimum size=1mm}
		]
		
		\node[roundnodes](node1){1};
		\node[roundnodes](node2)[right=1.7cm of node1]{2};
		\node[roundnodes](node3)[right=1.7cm of node2]{3};
		\node[roundnodes2](node4)[right=1.7cm of node3]{4};

		\draw[-{Classical TikZ Rightarrow[length=1.5mm]}] (node2) to node [above,swap] {$f_{3,2}$} (node3);
		\draw[-{Classical TikZ Rightarrow[length=1.5mm]}] (node1) to node [above,swap] {$f_{2,1}$} (node2);
		\draw[-{Classical TikZ Rightarrow[length=1.5mm]}] (node3) to node [above,swap] {$f_{4,3}$} (node4);

	\end{tikzpicture}
	
	\caption{Path graph with 4 nodes.}
	\label{fig:path_graph_4}
\end{figure}
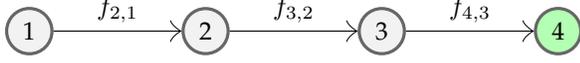

The dynamics associated with the path graph in Fig. \ref{fig:path_graph_4} are given as: 

\begin{align}
	\label{eq:Path_4nodes}
	\begin{aligned}
		\dot{x}_4 &= f_{4,3}(x_3) + u_4 \\
		\dot{x}_3 &= f_{3,2}(x_2) + u_3 \\ 
		\dot{x}_2 &= f_{2,1}(x_1) + u_2 \\
		\dot{x}_1 &= u_1.
	\end{aligned}
\end{align}

In this multi-step procedure, starting from the edge connected to the sink, each step identifies one successive edge away from the sink. Additionally, during the identification of $f_{i,j}$, all nodes that have a path to node $j$ are initialized with zero initial conditions and are not excited by external inputs. Since we measure the sink,  the solution $x_4(t)$ is known and we assume that its higher order time derivatives are also known, or computed with sufficient accuracy. 
\begin{enumerate}[wide, labelwidth=!, labelindent=0pt]
	\item \emph{Identification of $f_{4,3}$}: We initialize the system with $K$ different initial conditions, $x_0^{(k)} := (x_{10}^{(k)},\dots,x_{40}^{(k)})^T$, with $x_{30}^{(k)} \neq 0$, $x_{j0}^{(k)} = 0$, $j \neq 3$, $k = 1,\dots,K$, and zero inputs $\uu(t) = 0^T$. 
	Evaluating $\dot{x}_4$ at time $t=0$, we obtain
	\[
	\dot{x}_4^{(k)}(0) = f_{4,3} (x_{3}^{(k)}(0)) = \sum_{\ell} \alpha_{43\ell} \phi_{43\ell}(x_{30}^{(k)}). 
	\]
	Assuming $K > L_{43}$, and the knowledge of the dictionary functions $\phi_{43\ell}$, we can recover the coefficients $\alpha_{43\ell}$ by formulating the regression problem 
	\[
	\alpha_{43\ell} = \argmin \sum_{k=1}^{K} \left(\dot{x}_4^{(k)}(0) - \sum_{\ell} \alpha_{43\ell} \phi_{43\ell}(x_{30}^{(k)})\right)^2.
	\]
	\item \emph{Identification of $f_{3,2}$}: 
	Once $f_{4,3}$ is identified, we initialize the system with multiple initial conditions $x_{20}^{(k)}, x_{30}^{(k)} \neq 0, k = 1,\dots,K$, and $\uu(t) = 0^T$. We have 
	$$
	\!\ddot{x}_4 \!=\! \frac{df_{4,3}(x_3)}{dt} \!=\!  \frac{\partial\! f_{4,3}}{\partial x_3} \frac{dx_3}{dt} 
	\!=\! \frac{\partial\! f_{4,3}}{\partial x_3} \big( \sum_\ell \alpha_{32\ell} (x_2) \big).
	$$
	Since $f_{4,3}$ is known, we can choose $x_{30}^{(k)}$ such that $\frac{\partial f_{4,3}}{\partial x_3}|_{x_{30}^{(k)}} \neq 0$. Choosing $K > L_{32}$, and the measurements of $\ddot{x}_4^{(k)}(0)$, we can solve for the parameters $\alpha_{32\ell}$ by formulating the regression problem 
	\[
	\alpha_{32\ell} = \argmin \sum_{k=1}^{K} \left( \frac{\ddot{x}_4^{(k)}(0)}{\frac{\partial f_{4,3}}{\partial x_3}|_{x_{30}^{(k)}}} - \sum_{\ell} \alpha_{32\ell} \phi_{32\ell}(x_{20}^{(k)})\right)^2.
	\]
	\item \emph{Identification of $f_{2,1}$}: From the knowledge of $f_{4,3}$ and $f_{3,2}$, we proceed to identify the last edge $f_{2,1}$. We initialize $x_{10}^{(k)},x_{20}^{(k)},x_{30}^{(k)} \neq 0$ and $\uu(t) = 0^T$. The third derivative of $x_4$ is computed as 
	\[
	\!\dddot{x}_4\! =\! \frac{\partial^2 \!f_{4,3}}{\partial x_3^2} (f_{3,2}(x_2))^2\! + \frac{\partial\! f_{4,3}}{\partial x_3}  \frac{\partial \! f_{3,2}}{\partial x_2} \big(\sum_\ell \alpha_{21\ell}\phi_{21\ell}(x_1)\big).
	\]
	From the knowledge of $f_{4,3},f_{3,2},$ we choose the initial conditions $x_{30}^{(k)}$, $x_{20}^{(k)}$, such that $\frac{\partial f_{4,3}}{\partial x_3}|_{x_{30}^{(k)}} \neq 0$ and $\frac{\partial f_{3,2}}{\partial x_2}|_{x_{20}^{(k)}} \neq 0$ for all $k$, and given the measurements of $\dddot{x}_4^{(k)}(0)$, we can solve for the parameters $\alpha_{21\ell}$ by formulating the regression problem
	\begin{align*}
		\alpha_{21\ell} = \argmin \sum_{k=1}^{K} &\left( \frac{\dddot{x}_4^{(k)}(0) - \frac{\partial^2 f_{4,3}}{\partial x_3^2}(f_{3,2}(x_{20}^{(k)}))^2}{\frac{\partial f_{4,3}}{\partial x_3}|_{x_{30}^{(k)}} \frac{\partial f_{3,2}}{\partial x_2}|_{x_{20}^{(k)}}}  \right.\\  & \quad -\left. \sum_{\ell} \alpha_{21\ell} \phi_{21\ell}(x_{10}^{(k)})\right)^2.
	\end{align*}
\end{enumerate}

Since our theoretical identifiability results guarantee uniqueness of the functions by using multiple initial conditions and zero excitation signals,
the regression problems are well-posed, i.e.  admit a unique solution, provided that the number $K$ of initial conditions is large enough, and the initial conditions are independent enough. In practice, the choice of $K$ depends on the numbers $L_{ij}$ of dictionary functions in \eqref{eq:dic_fun}. Additionally, the non-zero components in each initial condition $x_0^{k}$ are chosen such that the regression problem is well-posed. In general, the above procedure can be extended to a path graph of any length, and the measurement of the derivatives up to order $n-1$ at the sink is essential for identification of the edges in a path of $n$ nodes. This is consistent with the discrete-time dynamics, where the effect of an input at distance $n$ from the sink is reflected in the sink after a delay of $n$ time instants. The procedure for the identification of a path graph is summarized in Algorithm \ref{alg:path_graph}.

\begin{algorithm}
	\caption{Identification of path graph with $n$ nodes}
	\label{alg:sec_step}
	\begin{algorithmic}[1] 
		\renewcommand{\algorithmicrequire}{\textbf{Input:}}
		\renewcommand{\algorithmicensure}{\textbf{Output:}}
		\REQUIRE Measurement of the sink $F_n$, set of dictionary functions $\phi_{ii-1\ell}$ of cardinality $L_{i,i-1}$, where $i=2,\dots,n$.
		\ENSURE The parameters $\alpha_{ii-1\ell}$ corresponding to the function $f_{i,i-1}$ as in  equation (\ref{eq:dic_fun}).
		\STATE $i \gets n$
		\STATE $K = \max(L_{ii-1})$
		\WHILE{$i > 1$}
		\STATE Set $x_{j}^{(k)}(0) = 0\ \forall\ 1< j < i-2$, $k = 1,\dots,K$.
		\STATE Set $\uu(t) = 0$
		\IF{$i = n$}
		\STATE Choose different initial conditions $x_{n-1}^{(k)}(0) \neq 0$ for all $k = 1,\dots,K$.
		\ELSE
		\STATE Choose $x_{j-1}^{(k)}(0) \neq 0$ such that $\frac{\partial f_{j,j-1}}{\partial x_{j-1}}|_{x_{j-1}^{(k)}(0)} \neq 0$, $k = 1,\dots,K$, and $j=i-1,\dots,n-1$
		\ENDIF
		\STATE Using the $(n-i+1)^{th}$ derivative of $x_n$, and the functions $f_{j+1,j}$ for $j=i,\dots,n-1$, and the dictionary functions for $f_{i,i-1}$ formulate a regression problem and solve for the coefficients $\alpha$. 
		\STATE $i \gets i-1$.
		\ENDWHILE
	\end{algorithmic}
	\label{alg:path_graph}
\end{algorithm}

Since a specific path in a tree can be isolated by setting to zero the excitation signals of the nodes that do not belong to this path, the method remains valid for the identification of trees. 

\subsection{Algorithm for identification of DAGs}
\label{sec:Algorithm_DAG}
\begin{figure}[!ht]
    \centering
    \begin{tikzpicture}
    [
roundnodes/.style={circle, draw=black!60, fill=black!5, very thick, minimum size=1mm},roundnode/.style={circle, draw=white!60, fill=white!5, very thick, minimum size=1mm},roundnodes2/.style={circle, draw=black!60, fill=green!30, very thick, minimum size=1mm}
]
\node[roundnodes](node1){1};
\node[roundnodes](node2)[above=of node1,yshift=1mm,xshift=1.9cm]{2};
\node[roundnodes2](node3)[right=3.2cm of node1]{3};

\draw[-{Classical TikZ Rightarrow[length=1.5mm]}] (node1) to node [left,swap,yshift=2.5mm] {$f_{2,1}$} (node2);
\draw[-{Classical TikZ Rightarrow[length=1.5mm]}] (node2) to node [right,swap,yshift=2.5mm] {$f_{3,2}$} (node3);
\draw[-{Classical TikZ Rightarrow[length=1.5mm]}] (node1) to node [below,swap,yshift=0mm] {$f_{3,1}$} (node3);

\end{tikzpicture}
    \caption{DAG whose nonlinear functions can be identified with the measurement of the sink 3 by using Algorithm~\ref{alg:path_graph} since the two paths between nodes 1 and 3 have different lengths.}
    \label{fig:triangle_graph}
\end{figure}
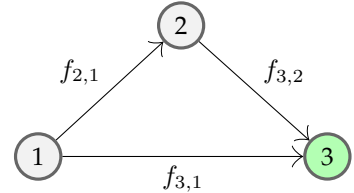 
When between two nodes $i$ and $j$ the length of all the paths is different, Algorithm~\ref{alg:path_graph} can still be used for the identification, since the information arriving from $i$ will affect $j$ at different orders of the derivatives depending on the path.
For instance, let us consider the DAG in Fig.~\ref{fig:triangle_graph}. Notice that the edge $f_{3,2}$ can be easily identified by setting different initial conditions to the node 2 and using the information of $\dot x_3$. Similarly, the edge $f_{3,1}$ can be identified by setting different initial conditions to the node 1 and using the information of $\dot x_3$, since the information coming through the node 2 will only  affect 3 with the second order derivative $\ddot x_3$. Finally, the identification of the edge $f_{2,1}$ implies the use of the second order derivative:
\begin{align*}
    \ddot x_3&=\frac{\partial f_{3,2}}{\partial x_{2}}\dot x_2+\frac{\partial f_{3,1}}{\partial x_{1}}\dot x_1\\
    &=\frac{\partial f_{3,2}}{\partial x_{2}}(f_{2,1}(x_1))+\frac{\partial f_{3,1}}{\partial x_{1}}u_1,
\end{align*}
where we have already identified $f_{3,2}$ and $f_{3,1}$, and we can set $\uu=0^T$. However, this procedure cannot be applied in a DAG where two nodes are connected by two or more paths of the same length, since the information is indistinguishable. For example, let  us consider again the DAG in Fig.~\ref{fig:model} where we assume that the edges $f_{4,2}$ and $f_{4,3}$ are linear and have been already identified. The second order derivative $\ddot x_4$ satisfies
\begin{equation*}
\ddot{x}_4 =  \frac{\partial f_{4,2}}{\partial x_2} \left( f_{2,1}(x_1) \right) + \frac{\partial f_{4,3}}{\partial x_3} \left( f_{3,1}(x_1) \right),
\end{equation*}
where the edges $f_{2,1}$ and $f_{3,1}$ cannot be distinguished, since both  affect the node 4 at the second derivative. Therefore, we need a different algorithm for the identification of the edges in parallel paths with the same length.

In the DAG in Fig. \ref{fig:model},  the information from the node 1 arrives to the measured node 4 through two different paths. The dynamics are given by 
\begin{align}
\label{eq:DAG_4nodes}
\begin{aligned}
\dot{x}_4 &= f_{4,2}(x_2) + f_{4,3}(x_3) + u_4 \\
\dot{x}_3 &= f_{3,1}(x_1) + u_3 \\ 
\dot{x}_2 &= f_{2,1}(x_1) + u_2 \\
\dot{x}_1 &= u_1.
\end{aligned}  
\end{align}
As in Assumption \ref{ass:dic_fun}, we assume that the unknown functions $f_{i,j}$ are linear combinations of known dictionary functions, i.e.,
\[
f_{i,j} = \sum_\ell \alpha_{ij\ell} \phi_{ij\ell},
\]
and we formulate the following algorithm to identify the coefficients $\alpha_{ij\ell}$.  
\begin{enumerate}[wide, labelwidth=!, labelindent=0pt]
    \item \emph{Identification of $f_{4,3}$ and $f_{4,2}$}: Setting $x_1(0) = 0$, $x_2(0) = 0$, $x_3(0) \neq 0$, $x_4(0)= 0$, and inputs $\uu = 0^T$, the network dynamics is equivalent to a simple path graph with 2 nodes. The coefficients $\alpha_{43\ell}$ are identified with the procedure for a path graph discussed in~Section~\ref{sec:Algorithm} by using the first-order derivatives of $x_4$. Similarly, we can identify $f_{4,2}$ by assigning $x_1(0) = 0,x_2(0) \neq 0, x_3(0) = 0, x_4(0) = 0$, and $\uu = 0^T$. 
    \item \emph{Identification of $f_{2,1}$ and $f_{3,1}$}: From the knowledge of $f_{4,3}$ and $f_{4,2}$, we proceed to compute the functions $f_{2,1}$ and $f_{3,1}$. Since both functions $f_{2,1}$ and $f_{3,1}$ depend on $x_1$, it is natural that we assume they share the same set of dictionary functions $\phi_{1\ell}$ over $x_1$ with coefficients $\alpha_{21\ell}$ and $\alpha_{31\ell}$ respectively. Given the dynamics \eqref{eq:DAG_4nodes}, the second-order derivative of $x_4$ can be written as 
    \begin{align}
        \label{eq:x4_ddot_Bridge}
        \begin{aligned}
        \ddot{x}_4 &=  \frac{\partial f_{4,2}}{\partial x_2} \dot{x}_2 + \frac{\partial f_{4,3}}{\partial x_3} \dot{x}_3 \\
        &=  \frac{\partial f_{4,2}}{\partial x_2} \left( f_{2,1}(x_1) \right) + \frac{\partial f_{4,3}}{\partial x_3} \left( f_{3,1}(x_1) \right)
        \end{aligned}
    \end{align}
    Initializing $x_1(0) \neq 0$, $x_2(0) \neq 0$ and $x_3(0) \neq 0$, and we define 
    \begin{align}
    \label{eq:gamma_def}
    \gamma_{2} := \frac{\partial f_{4,2}}{\partial x_2}|_{x_{20}}  \quad \mbox{and} \quad \gamma_{3} := \frac{\partial f_{4,3}}{\partial x_3}|_{x_{30}}.
    \end{align}
    Measuring $\ddot{x}_4$ at $t=0$, we have 
    \begin{align*}
    \begin{aligned}
       \ddot{x}_4|_{t=0} &= \frac{\partial f_{4,2}}{\partial x_2}|_{x_{2}(0)} \left( \sum_\ell \alpha_{21\ell} \phi_{1\ell}(x_1(0)) \right) \\
       &\quad\; +  \frac{\partial f_{4,3}}{\partial x_3}|_{x_{3}(0)} \left( \sum_\ell \alpha_{31\ell} \phi_{1\ell}(x_1(0)) \right).
       \end{aligned}
       \end{align*}
      Substituting \eqref{eq:gamma_def} in the above equation, we get
      \begin{align*}
      \begin{aligned}
      \ddot{x}_4|_{t=0} & = \sum_{\ell} \gamma_2 \alpha_{21\ell} \phi_{1\ell}(x_1(0)) + \sum_{\ell} \gamma_3 \alpha_{31\ell} \phi_{1\ell}(x_1(0)) \\ 
      &= \sum_\ell \left( \gamma_{2} \alpha_{21\ell} +\gamma_{3} \alpha_{31\ell} \right) \phi_{1\ell}(x_1(0)). 
    \end{aligned}   
    \end{align*}
    From the initial conditions $x_{j}^{(k)}(0) \neq 0, j = 1,2,3$, $k = 1,\dots,K$ and the corresponding measurement of $\ddot{x}_4^{(k)}(0)$, we can solve for the coefficients $\alpha_{21\ell}$ and $\alpha_{31\ell}$ by formulating the regression problem
    \small{\begin{align}
    \label{eq:reg_DAG}
    \begin{aligned}
    &\alpha_{21\ell},\alpha_{31\ell} = \\ &\argmin \sum_{k=1}^{K} \bigg(\ddot{x}_4^{(k)}(0) - \sum_{\ell} \left( \gamma_2^{(k)} \alpha_{21\ell} +\gamma_3^{(k)} \alpha_{31\ell} \right) \phi_{1\ell}(x_1^{(k)}(0))\bigg)^2,
    \end{aligned}
    \end{align}}
    \normalsize{}
    where 
    \[
    \gamma_2^{(k)} = \frac{\partial f_{4,2}}{\partial x_2}|_{x_{2}^{(k)}(0)} \quad  \quad \gamma_{3}^{(k)} = \frac{\partial f_{4,3}}{\partial x_3}|_{x_{3}^{(k)}(0)}. 
    \]
    Through proper choices of $x_j^{(k)}(0)$, $j = 1,2,3$ and $k = 1,\dots,K$ with $K$ greater than the total number of dictionary functions $\phi_{1\ell}$, the above regression problem is well-posed.
\end{enumerate}
The above procedure for the identification of the DAG in Fig.~\ref{fig:model} can be generalized to multiple parallel paths of length $2$ between two nodes. Consider a network with $n$ nodes, with nodes $2,3,\dots,n-1$ as the different intermediate (and parallel) nodes between nodes $1$ and $n$ with the dynamics 
\begin{align*}
    \begin{aligned}
     \dot{x}_1 &= u_1 \\
    \dot{x}_2 &= f_{2,1}(x_1)+u_2 \\   
    & \ \ \vdots \\ 
    \dot{x}_{n-1} &= f_{n-1,1}(x_1) +u_{n-1} \\ 
    \dot{x}_n &= \sum_{j=2}^{n-1}f_{n,j}(x_j) + u_n.
    \end{aligned}
\end{align*}
The algorithm for the identification of DAGs is summarized in Algorithm \ref{alg:bridge_graph}.
\begin{algorithm}
\caption{Identification with multiple parallel paths of length $2$}
\label{alg:sec_step_multiple}
\begin{algorithmic}[1] 
 \renewcommand{\algorithmicrequire}{\textbf{Input:}}
 \renewcommand{\algorithmicensure}{\textbf{Output:}}
 \REQUIRE Measurement of the sink (node $n$), set of dictionary functions $\phi_{nj\ell}, j =2,\dots,n-1,$ and $ \phi_{1\ell}$
 \ENSURE The parameters $\alpha_{nj\ell}$, and $\alpha_{j1\ell}$, where $j=2,\dots,n-1$, corresponding to the functions $f_{n,j}$ and $f_{j,1}$.
\STATE For $j=2,\dots,n-1$, set $x_{i}^{(k)}(0) = 0$ for $i\neq j$, $\uu = 0^T$ and choose $x_j^{(k)} \neq 0$ for $k=1,\dots,K$ and using Algorithm \ref{alg:path_graph}, identify the coefficients $\alpha_{nj\ell}$.  
\STATE Choose $x_{j}^{(k)}(0) \neq 0$ for $j=1,\dots,n-1$, $\uu = 0^T$ and from the knowledge of $f_{n,j}$, $j=2,\dots,n-1$ through coefficients computed in Step 1, define 
\[
\gamma_j^{(k)} := \frac{\partial f_{n,j}}{\partial x_j}|_{x_{j}^{(k)}(0)}.
\]
\STATE Retrieve the coefficients $\alpha_{j1\ell}$ by solving the regression problem  
\begin{align*}
&\ \ \alpha_{j1\ell} =\\
&\argmin \!\sum_{k=1}^{K}\!\! \bigg(\ddot{x}_4^{(k)}(0) \!-\! \sum_{\ell} \left[ \small{\sum_{j=2}^{n-1}\gamma_i^{(k)} \alpha_{j1\ell}} \right] \phi_{1\ell}(x_1^{(k)}(0))\!\!\bigg)^2\!.
\end{align*}
\end{algorithmic}
\label{alg:bridge_graph}
 \end{algorithm}

 The procedure can be extended to networks having multiple parallel paths of equal length $n$ (and $n>2$) as follows. The last $n-1$ edges in each independent path can be identified using Algorithm \ref{alg:path_graph} by treating them as a path graph of length $n-1$, and the initial conditions of the nodes in the other paths are set to $0$. The $k^{th}$ edge from the sink is identified through the $k^{th}$-order derivative of the sink by applying Algorithm~\ref{alg:path_graph}. Then the procedure developed in Algorithm ~\ref{alg:bridge_graph} can be used to identify the edges connected to the source node.  

The combination of Algorithms~\ref{alg:path_graph} and \ref{alg:bridge_graph} allows us to ideally identify any DAG from nonzero initial conditions with only the measurement of the sinks. For instance, let us consider the DAG in Fig.~\ref{fig:bridge_graph_additional}. The edges $f_{4,1}$, $f_{4,2}$ and $f_{4,3}$ can be easily identified by using Algorithm~\ref{alg:path_graph}, and the edges $f_{2,1}$ and $f_{3,1}$ can be identified by using Algorithm~\ref{alg:bridge_graph} since they are the initial edges in the two paths 1-2-4 and 1-3-4 with the same length.

\begin{figure}[ht]
    \centering
    \begin{tikzpicture}
    [
roundnodes/.style={circle, draw=black!60, fill=black!5, very thick, minimum size=1mm},roundnode/.style={circle, draw=white!60, fill=white!5, very thick, minimum size=1mm},roundnodes2/.style={circle, draw=black!60, fill=green!30, very thick, minimum size=1mm}
]
\node[roundnodes](node1){1};
\node[roundnodes](node2)[above=of node1,yshift=1mm,xshift=1.9cm]{2};
\node[roundnodes2](node4)[right=3.2cm of node1]{4};
\node[roundnodes](node3)[below=of node1,yshift=1mm,xshift=1.9cm]{3};

\draw[red,-{Classical TikZ Rightarrow[length=1.5mm]}] (node1) to node [left,swap,yshift=2.5mm] {$f_{2,1}$} (node2);
\draw[blue,-{Classical TikZ Rightarrow[length=1.5mm]}] (node2) to node [right,swap,yshift=2.5mm] {$f_{4,2}$} (node4);
\draw[red,-{Classical TikZ Rightarrow[length=1.5mm]}] (node1) to node [left,swap,yshift=-2.5mm] {$f_{3,1}$} (node3);
\draw[blue,-{Classical TikZ Rightarrow[length=1.5mm]}] (node3) to node [right,swap,yshift=-2.5mm] {$f_{4,3}$} (node4);
\draw[blue,-{Classical TikZ Rightarrow[length=1.5mm]}] (node1) to node [above,swap,yshift=0mm] {$f_{4,1}$} (node4);

\end{tikzpicture}
    \caption{DAG whose nonlinear functions can be identified with the measurement of the sink 4 by using Algorithms~\ref{alg:path_graph} and \ref{alg:bridge_graph}. The identification of the edges $f_{4,1}$, $f_{4,2}$ and $f_{4,3}$ can be done by using Algorithm~\ref{alg:path_graph} while Algorithm~\ref{alg:bridge_graph} can be used for the identification of the edges $f_{2,1}$ and $f_{3,1}$.}
    \label{fig:bridge_graph_additional}
\end{figure}
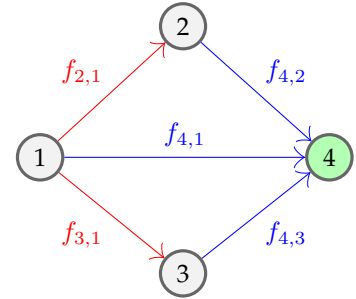

\subsection{Numerical Examples}
In this section, we provide numerical simulations to demonstrate the algorithms. For these examples, the nodes that are measured, the measurement of only the node value is made at a specific sampling time. Furthermore, we assume that the measurement is corrupted by an additive Gaussian noise of known mean and standard deviation. The time-derivatives of the signals are estimated from these sampled node measurements by numerical differentiation.
\subsubsection{Numerical example for trees}

We first illustrate our network identification procedure with a $3$-node path graph. Consider the dynamics defined by
\begin{align*}
	\dot{x}_1 &= u_1 \\
    \dot{x}_2 &= -2x_1 + 1.25 x_1^2 + u_2 \\
    \dot{x}_3 &= -x_2 + 0.7x_2^2 -0.6x_2^3 + u_3.
\end{align*}
The functions $f_{3,2}(x_2)$ and $f_{2,1}(x_1)$ are to be recovered from the data. The experiment is performed as discussed in Section~\ref{sec:Algorithm}. $50$-data paths are generated with nonzero initial conditions chosen uniformly randomly from $[-1,1]$ for each experiment. For each data path, the values of the sink (node $3$) is measured with a sampling time of $0.4$ for $10$ samples and the time-derivatives are estimated using a Savitzky-Golay filter \cite{SG1964} of window length $10$ and polynomial degree $5$. The measurement of the sink is assumed to be corrupted by a Gaussian noise of mean $0$ and standard deviation $\sigma = 0.001$. Since we assume that the functions are analytic, we use a dictionary function of monomials $x_i,x_i^2,x_i^3,x_i^4$. The estimate of $f_{2,1}$ and $f_{3,2}$ from the data are
\begin{align*}
	 	f_{2,1}(x_1) &= -2.004x_1  +1.253x_1^2 + 0.01x_1^3 + 0.003x_1^4 \\
        f_{3,2}(x_2) &=  -0.999x_2 +0.706x_2^2 -0.605x_2^3 + 0.011x_1^4.
\end{align*}
The Root Mean Squared Error (RMSE) in the identification of all the edges is equal to $0.0022$.

A comparative study is performed for different standard deviation $\sigma$ of the measurement noise and it is observed that, as $\sigma$ decreases, the RMSE of the identification decreases (Table \ref{tab1}).
\begin{table}[h]
	\centering
	\begin{tabular}{|c|c|}
		\hline
		$\bm{\sigma}$ & \textbf{RMSE} \\ \hline 
		$10^{-1}$ & $0.410$ \\ \hline
        $10^{-2}$ & $0.027$ \\ \hline
		$10^{-3}$ & $0.002$ \\ \hline
		$10^{-4}$& $0.001$  \\ \hline
	\end{tabular}
	\vspace{2mm}
	\caption{RMSE for various values of $\sigma$}
	\label{tab1}
\end{table}

\subsubsection{Numerical example for DAGs}

Now, we consider a 4-node DAG as in Fig. \ref{fig:model} with the functions given by
\begin{align}
    \label{eq:DAG_numex}
    \begin{aligned}
    f_{4,2}&= x_2 -1.25x_2^2 \\
    f_{4,3}&= 1.3x_3^2-0.6x_3^3 \\
    f_{2,1}&= -1.25x_1 -0.6x_1^3+ 0.2x_1^4 +  \sin(10x_1) \\ 
    f_{3,1}&= 1.3x_1^2-0.6x_1^3+ \sin(5x_1).
\end{aligned}
\end{align}
The dictionary functions $f_{i,j}$ are chosen as follows:
\begin{align*}
\mbox{for } f_{4,2}:\ & \{x_2,x_2^2,x_2^3,x_2^4\} \\
\mbox{for } f_{4,3}:\ & \{x_3,x_3^2,x_3^3,x_3^4\} \\
\mbox{for } f_{2,1} \mbox{ and } f_{3,1}:\ & \{x_1,x_1^2,x_1^3,x_1^4,\sin(5x_1),\sin(10x_1)\}. 
\end{align*}
The experiment is performed as discussed in Section~\ref{sec:Algorithm_DAG} for 100 data paths ($K = 100$) with nonzero initial conditions chosen uniformly randomly over $[-1,1]$. The value of node $4$ is measured with a sampling time of $0.3$, and $10$ samples are measured for each initial condition. The measurement is corrupted with a Gaussian noise of mean $0$ and standard deviation $\sigma = 0.0001$. Similarly to the examples for trees, the time-derivatives of the node are computed using Savitzky–Golay filter with window length $10$ and polynomial degree $5$. The identified functions are

\vspace{-3mm}
\small
\begin{align*}
   f_{4,2}(x_2)  &= 1.004x_2 -1.255x_2^2-0.000546 x_2^3-0.00892x_2^4 \\
   f_{4,3}(x_3) &= -0.00311x_3+1.305x_3^2 -0.605x_3^3 - 0.00595x_3^4 \\
   f_{2,1}(x_1) &= -1.252x_1 -0.00103x_1^2 -0.608x_1^3+0.202x_1^4 \\
        &\ \ \ \; -0.0004838\sin(5x_1)+ 1.003\sin(10x_1) \\
   f_{3,1}(x_1) &= 0.00003317 x_1+  1.307x_1^2
       -0.6004x_1^3 -0.00305x_1^4 \\ 
       &\ \ \ \; +1.004\sin(5x_1) +0.0002886\sin(10x_1).
\end{align*}
\normalsize
\noindent The RSME in the estimation of the coefficients for different standard deviation $\sigma$ of the noise are given in Table~\ref{tab2}.
\begin{table}[h]
       \centering
    \begin{tabular}{|c|c|}
    \hline
         $\bm{\sigma}$ & \textbf{RMSE} \\ \hline 
         $10^{-2}$ & $0.126$ \\ \hline 
         $10^{-3}$ & $0.011$ \\ \hline 
         $10^{-4}$ & $0.006$ \\ \hline 
    \end{tabular}
    \vspace{2mm}
    \caption{RMSE for various values of $\sigma$}
    \label{tab2}
\end{table} 

The two examples illustrate the algorithms developed in this paper. The noise in the measurement poses a significant challenge in estimating the higher order derivatives. As mentioned earlier, identifying a function that lies $n$ nodes away from the sink requires computing its $n^{\text{th}}$-order derivative. In the numerical studies, these derivatives are approximated using finite-difference methods, which typically require higher sampling rates. As an alternative, additional sensors could be used to directly measure signal derivatives. However, both the measurement and the estimation of derivatives effectively rely on high-pass filtering, which makes them highly sensitive to noise. In the numerical examples, it was observed that, when there is no noise in the time-series data and the sampling time is low, we were able to obtain good estimates of time-derivatives and, therefore, identify edges further away from the sink. 

Additionally, the multi-step nature of the identification process propagate the error through subsequent steps. When these propagated errors are combined with inaccuracies in the measurement of higher-order derivatives, they could limit the maximum number of edges that can be reliably identified from the sink. This implies that, in real applications, it could be necessary to measure additional nodes to obtain more accurate results like in the partial excitation and measurement case \cite{vizuete2024partial}.

\section{Conclusions}\label{sec:conclusions}

We derived identifiability conditions for nonlinear edge functions in trees and general DAGs in the continuous-time setting. Similarly to the discrete-time case, we showed that the measurement of the sinks is necessary and sufficient for the identifiability of the networks by considering general nonlinear functions in the case of trees and pure nonlinear functions in the case of general DAGs. Based on these identifiability conditions, we proposed two algorithms for the identification of continuous-time networked dynamics based on the measurement of higher-order time derivatives. Finally, we showed that any DAG can be identified by using a combination of the two algorithms in the case of pure nonlinear functions.

 A natural extension of this work would be to derive identifiability conditions in the continuous-time setting for more complex network topologies, including cycles \cite{vizuete2024nonlinear}. An identification method could also be designed for the case of partial excitation 
and measurement \cite{vizuete2024partial}, and non-additive models \cite{vizuete2025path,vanelli2025local}. Finally, the estimation of  higher-order time derivatives might be problematic in certain scenarios, so the use of techniques such as the Koopman operator \cite{mauroy2020thekoopman,anantharaman2025koopman} could be an important research direction in this field.

\ifCLASSOPTIONcaptionsoff
  \newpage
\fi

\bibliographystyle{IEEEtran}
\bibliography{references}

\appendix

\subsection*{Proof of Lemma~\ref{lemma:removal_node}}
In the DAG $G$, for constant excitation signals, the measurement of the node $i$ provides the output
\begin{align*}
x_i(t)&=F_i(\x^0,\uu,t)\\
    &=\int_{0}^t\sum_{\ell\in \NN_i}f_{i,\ell}(x_\ell(\tau))d\tau+u_it+x_i^0\\
    &=\sum_{\ell\in \NN_i}\prt{\int_{0}^t f_{i,\ell}(x_\ell(\tau))d\tau}+u_it+x_i^0.
\end{align*}
   Let us assume that there exists a set $\{\tilde f\}\neq \{ f\}$ such that $F_i=\tilde F_i$, which implies
   \begin{equation*}
\sum_{\ell\in \NN_i}\int_{0}^t f_{i,\ell}(x_\ell(\tau))d\tau=\sum_{\ell\in \NN_i}\int_{0}^t \tilde f_{i,\ell}(\tilde x_\ell(\tau))d\tau;    
\end{equation*}
\begin{multline}
    \sum_{\ell\in \NN_i}\int_{0}^t f_{i,\ell}(g_\ell(\x^0,\uu,\tau)+u_\ell \tau+x_\ell^0)d\tau=\\
    \sum_{\ell\in \NN_i}\int_{0}^t \tilde f_{i,\ell}(\tilde g_\ell(\x^0,\uu,\tau)+u_\ell \tau+x_\ell^0)d\tau,
\end{multline}
where we used $x_\ell(t)=g_\ell(\x^0,\uu,t)+u_\ell t+x_\ell^0$.
    Since the measurement of $j$ implies the knowledge of $g_j$ (i.e., $g_j=\tilde g_j$), and the edge $f_{i,j}$ is identifiable (i.e., $f_{i,j}=\tilde f_{i,j}$), we obtain   \begin{multline}\label{eq:equal_f_tilde_one_path}
        \sum_{\ell\in \NN_i\setminus\{j\}}\int_{0}^t f_{i,\ell}(g_\ell(\x^0,\uu,\tau)+u_\ell \tau+x_\ell^0)d\tau=\\
    \sum_{\ell\in \NN_i\setminus\{j\}}\int_{0}^t \tilde f_{i,\ell}(\tilde g_\ell(\x^0,\uu,\tau)+u_\ell \tau+x_\ell^0)d\tau.
    \end{multline}
    Next, we consider the induced subgraph $G_{V\setminus\{ j \}}$ where the node $j$ has been removed. The measurement of the node $i$ provides the output
    \begin{align}
    x_i(t)&=\sum_{\ell\in \NN_i\setminus\{j\}}\prt{\int_{0}^t f_{i,\ell}(x_\ell(\tau))d\tau}+u_it+x_i^0\nonumber\\
    &=\sum_{\ell\in \NN_i\setminus\{j\}}\prt{\int_{0}^t f_{i,\ell}(\hat g_\ell(\x^0,\uu,\tau)+u_\ell \tau+x_\ell^0)d\tau}\nonumber\\
    &\hspace{4mm}+u_it+x_i^0.\label{eq:node_i_subgraph}
    \end{align}
    Notice that the functions $\hat g_\ell$ do not depend on any information coming through $j$ since the node $j$ has only one path to $i$, so that $\hat g_\ell=g_\ell$, and \eqref{eq:node_i_subgraph} implies \eqref{eq:equal_f_tilde_one_path}. Therefore, if the edges $f_{i,\ell}$ and the functions $g_\ell$ for $\ell\in\NN_i\setminus\{j\}$ are identifiable in $G_{V\setminus\{ j \}}$, they are also identifiable in $G$.

\begin{IEEEbiography}[{\includegraphics[width=1in,height=1.25in,clip,keepaspectratio]{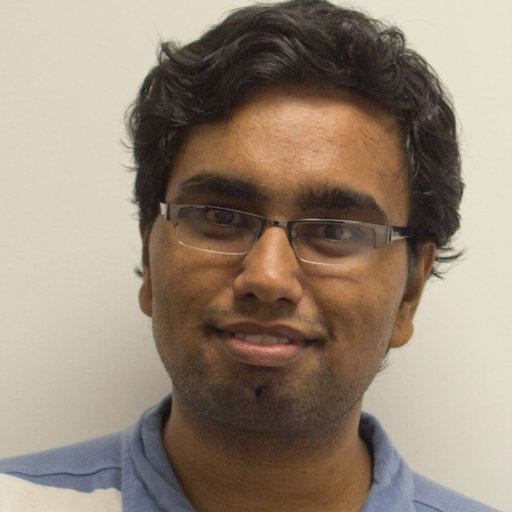}}]{Ramachandran Anantharaman} received his M.Tech (2012) and Ph.D (2021) degrees in Electrical Engineering from Indian Institute of Technology Bombay, India. He was a postdoctoral researcher with University of Namur, Belgium (2022-2024) and with Eindhoven University of Technology (TU/e), Netherlands (2024-2025). He is currently an Assistant Professor at Department of Electrical Engineering, Shiv Nadar University, Noida, India. His research interests are in Boolean Control Networks, Data-driven models and control of nonlinear systems, Network identification, Control of switched mode power converters.
\end{IEEEbiography}

\begin{IEEEbiography}[{\includegraphics[width=1in,height=1.25in,clip,keepaspectratio]{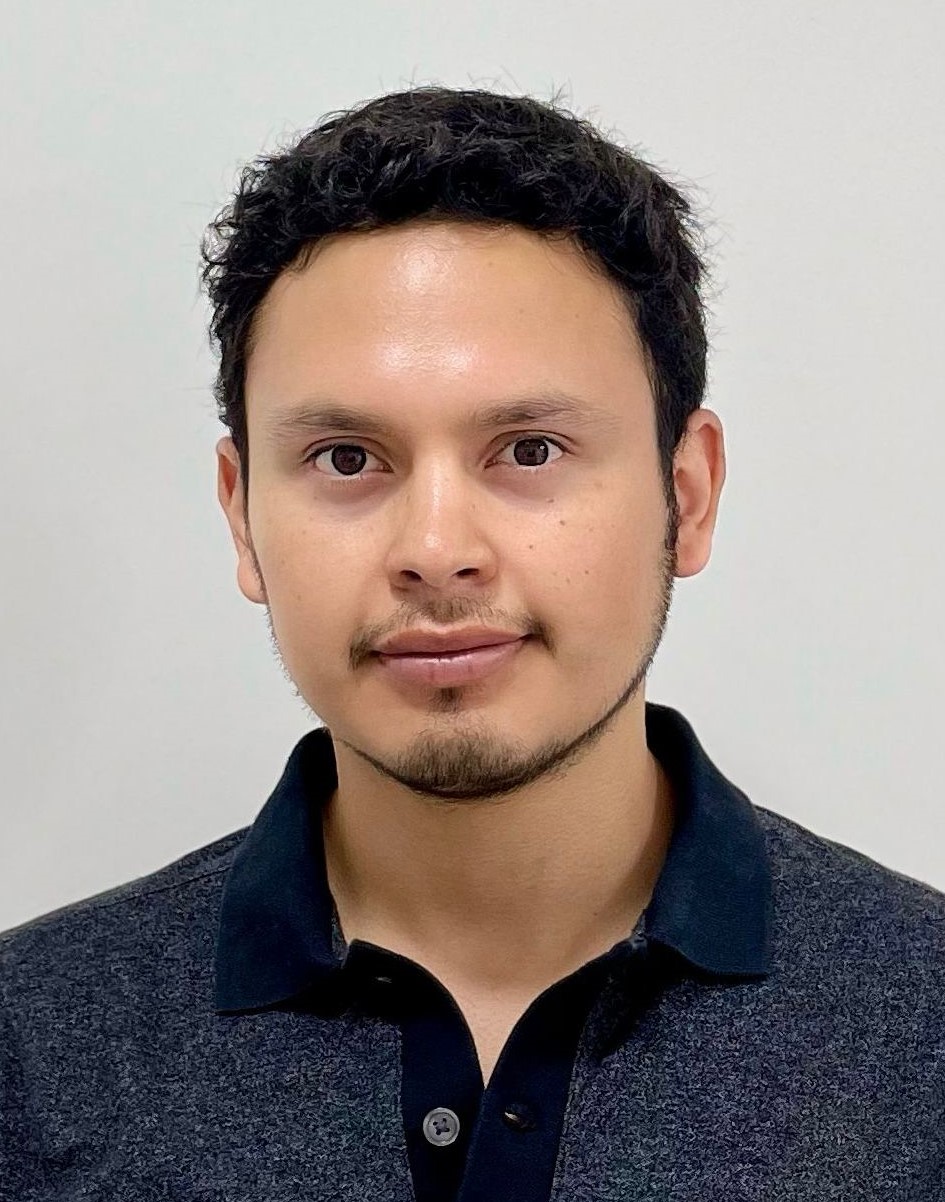}}]{Renato Vizuete}
received the M.S. degree (très bien) in Systems, Control, and Information Technologies from Université Grenoble Alpes, France (2019), and the PhD degree in Automatic Control from Université Paris-Saclay, France (2022). He is currently a FNRS postdoctoral researcher - CR at UCLouvain, Belgium, in the ICTEAM Institute. His research interests include optimization, control theory, machine learning, network theory and system identification. He was the recipient of the Networks and Communication Systems TC Outstanding Student Paper Prize of the IEEE Control Systems Society in 2022, and the Second Thesis Prize in the category Impact Science of the Fondation CentraleSupélec in 2023.
\end{IEEEbiography}

\begin{IEEEbiography}[{\includegraphics[width=1in,height=1.25in,clip,keepaspectratio]{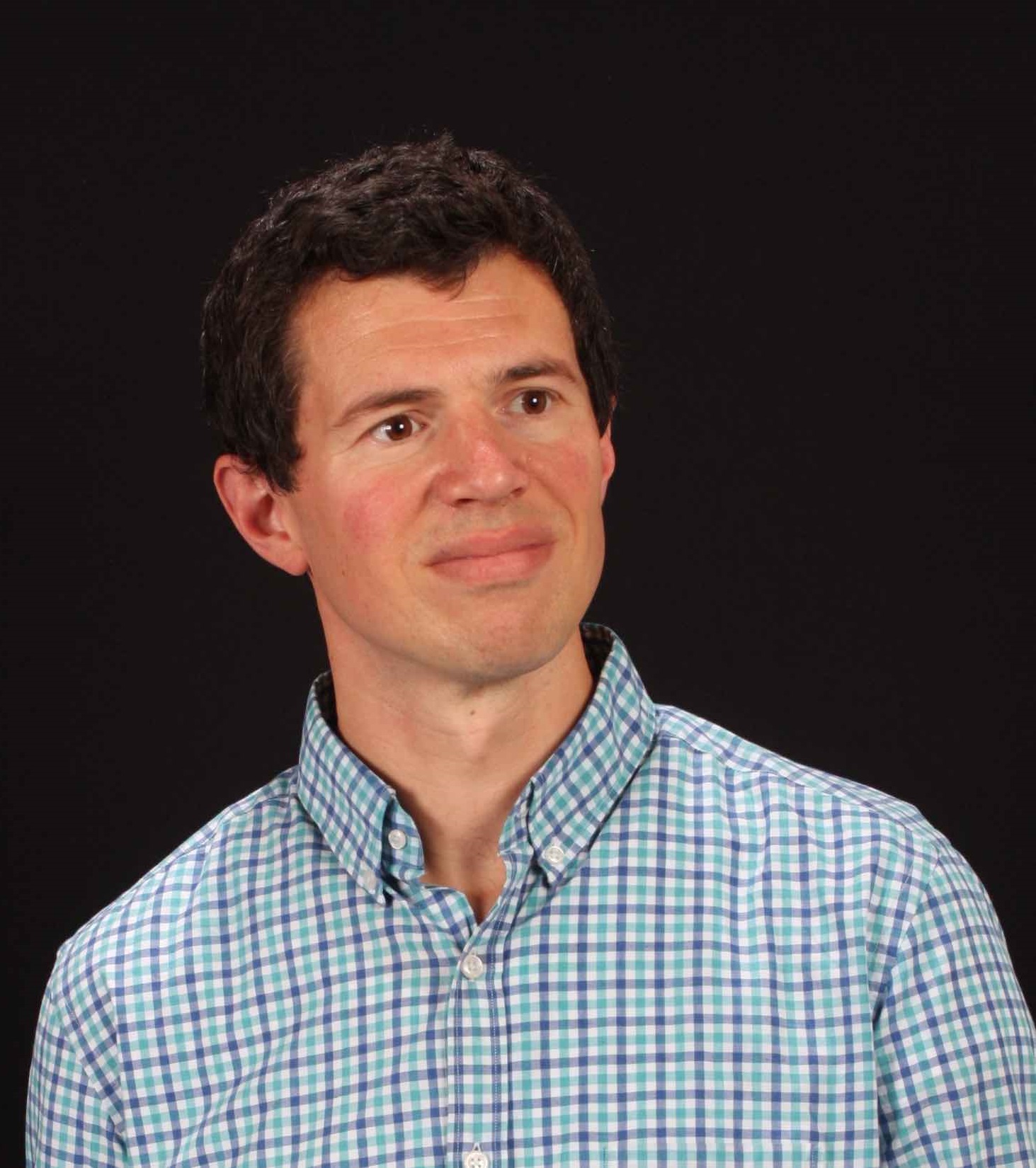}}]{Julien M. Hendrickx} is professor of mathematical engineering at UCLouvain, in the Ecole Polytechnique de Louvain since 2010, and is currently the head of the ICTEAM institute.

He obtained an engineering degree in applied mathematics (2004) and a PhD in mathematical engineering (2008) from the same university. He has been a visiting researcher at the University of Illinois at Urbana Champaign in 2003-2004, at the National ICT Australia in 2005 and 2006, and at the Massachusetts Institute of Technology in 2006 and 2008. He was a postdoctoral fellow at the Laboratory for Information and Decision Systems of the Massachusetts Institute of Technology 2009 and 2010, holding postdoctoral fellowships of the F.R.S.-FNRS (Fund for Scientific Research) and of Belgian American Education Foundation. He was also resident scholar at the Center for Information and Systems Engineering (Boston University) in 2018-2019, holding a WBI.World excellence fellowship.

Doctor Hendrickx is the recipient of the 2008 EECI award for the best PhD thesis in Europe in the field of Embedded and Networked Control, and of the Alcatel-Lucent-Bell 2009 award for a PhD thesis on original new concepts or application in the domain of information or communication technologies.
\end{IEEEbiography}

\begin{IEEEbiography}[{\includegraphics[width=1in,height=1.25in,clip,keepaspectratio]{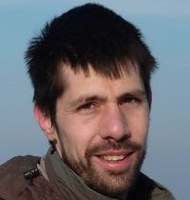}}]{Alexandre Mauroy} is an Associate Professor with the Department of Mathematics,  University of Namur, Belgium, and currently Director of the Namur Institute for Complex Systems (naXys).

He received the M.S. degree in aerospace engineering and the Ph.D. degree in applied mathematics from the University of Liège, Liège, Belgium, in 2007 and 2011, respectively. Prior to joining the University of Namur, he was a postdoctoral researcher with the University of California Santa Barbara from 2011 to 2013, the University of Liège from 2013 to 2015, and the University of Luxembourg in 2016. His research interests include synchronization in complex networks, network identification, and applications of operator-theoretic methods to dynamical systems and control.
\end{IEEEbiography}

\end{document}